\newtheorem{theorem}{Theorem}[section]
\numberwithin{equation}{theorem}
\newtheorem*{theorem*}{Theorem}
\newtheorem{lemma}[theorem]{Lemma}
\newtheorem{corollary}[theorem]{Corollary}
\theoremstyle{definition}
\newtheorem{remark}[theorem]{Remark}
\theoremstyle{conjecture}
\newcommand{\Ass}{\operatorname{Ass}}
\newcommand{\Ker}{\operatorname{Ker}}
\newcommand{\Coker}{\operatorname{Coker}}
\newcommand{\im}{\operatorname{Im}}
\newcommand{\ann}{\operatorname{ann}}
\newcommand{\rank}{\operatorname{rank}}
\newcommand{\id}{\operatorname{id}}
\newcommand{\fd}{\operatorname{fd}}
\newcommand{\pd}{\operatorname{pd}}
\newcommand{\Ext}{\operatorname{Ext}}
\newcommand{\Tor}{\operatorname{Tor}}
\newcommand{\Hom}{\operatorname{Hom}}
\newcommand{\edim}{\operatorname{edim}}
\newcommand{\depth}{\operatorname{depth}}
\newcommand{\type}{\operatorname{type}}
\newcommand{\Tot}{\operatorname{Tot}}
\newcommand{\suchthat}{\;\ifnum\currentgrouptype=16 \middle\fi|\;}
\newcommand{\hocolim@}[2]{%
  \vtop{\m@th\ialign{##\cr
    \hfil$#1\operator@font holim$\hfil\cr
    \noalign{\nointerlineskip\kern1.5\ex@}#2\cr
    \noalign{\nointerlineskip\kern-\ex@}\cr}}%
}
\newcommand{\hocolim}{%
  \mathop{\mathpalette\hocolim@{\rightarrowfill@\textstyle}}\nmlimits@
}
\newcommand{\holim@}[2]{%
  \vtop{\m@th\ialign{##\cr
    \hfil$#1\operator@font holim$\hfil\cr
    \noalign{\nointerlineskip\kern1.5\ex@}#2\cr
    \noalign{\nointerlineskip\kern-\ex@}\cr}}%
}
\newcommand{\holim}{%
  \mathop{\mathpalette\holim@{\leftarrowfill@\textstyle}}\nmlimits@
}
\def\@secnumfont{\bfseries}
\def\section{\@startsection{section}{1}%
  \z@{.7\linespacing\@plus\linespacing}{.5\linespacing}%
  {\normalfont\Large\bfseries\filcenter}}
\def\subsection{\@startsection{subsection}{2}%
  \z@{.5\linespacing\@plus.7\linespacing}{-.5em}%
  {\normalfont\large\bfseries}}
\DeclareFontFamily{OT1}{pzc}{}
\DeclareFontShape{OT1}{pzc}{m}{it}{<-> s * [1.20] pzcmi7t}{}
\DeclareMathAlphabet{\mathpzc}{OT1}{pzc}{m}{it}
\def\moverlay{\mathpalette\mov@rlay}
\def\mov@rlay#1#2{\leavevmode\vtop{%
   \baselineskip\z@skip \lineskiplimit-\maxdimen
   \ialign{\hfil$\m@th#1##$\hfil\cr#2\crcr}}}
\newcommand{\charfusion}[3][\mathord]{
    #1{\ifx#1\mathop\vphantom{#2}\fi
        \mathpalette\mov@rlay{#2\cr#3}
      }
    \ifx#1\mathop\expandafter\displaylimits\fi}
\providecommand{\bigsqcap}{%
  \mathop{%
    \mathpalette\@updown\bigsqcup
  }%
}
\newcommand*{\@updown}[2]{%
  \rotatebox[origin=c]{180}{$\m@th#1#2$}%
}
\begin{document}

\author[Hossein Faridian]{Hossein Faridian}

\title[Bounds on Injective Dimension and Exceptional Complete Intersection Maps]
{Bounds on Injective Dimension and \\ Exceptional Complete Intersection Maps}

\address{Hossein Faridian, School of Mathematical and Statistical Sciences, Clemson University, Clemson, SC 29634, USA.}
\email{hfaridi@g.clemson.edu}

\subjclass[2010]{13D05; 13C11; 13J99; 13D03.}

\keywords {injective dimension; exceptional complete intersection map; Andr\'{e}-Quillen homology}

\begin{abstract}
We prove that if $f:R \rightarrow S$ is a local homomorphism of noetherian local rings, and $M$ is a non-zero finitely generated or artinian $S$-module whose injective dimension over $R$ is bounded by the difference of the embedding dimensions of $R$ and $S$, then $M$ is an injective $S$-module and $f$ is an exceptional complete intersection map.
\end{abstract}

\maketitle

\sloppy

\section{Introduction}

In the past fifty years or so, a great body of research has been accomplished by algebraists on the properties of rings in terms of those of their modules. The trend has gradually shifted towards the relative situation in which one studies the properties of ring homomorphisms in terms of those of the modules along them. The focus of this work is on surjective homomorphisms of noetherian local rings. Such a map is a complete intersection map if its kernel is generated by a regular sequence. If in addition, the regular sequence is part of a minimal generating set for the maximal ideal of the source, then the homomorphism is called an exceptional complete intersection map. This class of maps is of particular interest as they arise naturally in various contexts; for example, the diagonal of a smooth map is locally exceptional complete intersection.

It is proved in \cite{BIK} that certain homological invariants of modules over ring homomorphisms detect the exceptional complete intersection property. More specifically, it is shown that if $f:R \rightarrow S$ is a local homomorphism of noetherian local rings, and $M$ is a non-zero finitely generated $S$-module with $\fd_{R}(M)\leq \edim(R)- \edim(S)$, then $M$ is a free $S$-module, $f$ is an exceptional complete intersection map, and $\fd_{R}(M)= \edim(R)- \edim(S)= \fd_{R}(S)$; see \cite[Theorem 3.1]{BIK}. The goal of this article is to prove an analogue of this theorem involving the injective dimension. More specifically, we prove the following result; see Theorems \ref{3.3} and \ref{3.12}.

\begin{theorem*}
Let $f:R \rightarrow S$ be a local homomorphism of noetherian local rings, and $M$ a non-zero finitely generated or artinian $S$-module with $\id_{R}(M)\leq \edim(R)- \edim(S)$. Then the following assertions hold:
\begin{enumerate}
\item[(i)] $M$ is an injective $S$-module, i.e. $M\cong E_{S}(l)^{\type_{S}(M)}$ where $l$ is the residue field of $S$.
\item[(ii)] $f$ is an exceptional complete intersection map.
\item[(iii)] $\id_{R}(M)= \edim(R)- \edim(S)= \fd_{R}(S)$.
\end{enumerate}
In particular, if $M$ is injective as an $R$-module, then $\edim(R)= \edim(S)$ and $f$ is flat.
\end{theorem*}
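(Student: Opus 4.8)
The plan is to deduce the theorem from its flat-dimension counterpart \cite[Theorem~3.1]{BIK} by Matlis duality. By a standard reduction --- completing, factoring $f$ through a Cohen factorization $R\to R'\to S$ (flat with regular closed fibre, followed by a surjection) and tracking how $\edim$, $\id$ and $\fd$ behave along the flat map (along which, moreover, the flat part is automatically exceptional complete intersection) --- I may assume that $R$ and $S$ are complete and that $f$ is surjective with kernel $I\subseteq\mathfrak m$. Write $r=\edim R-\edim S=\dim_k\big((I+\mathfrak m^2)/\mathfrak m^2\big)$; then $l=k$, the words ``finitely generated'' and ``artinian'' over $S$ mean the same over $R$, and --- since $f$ is surjective --- $\Hom_R(S,E_R(k))$ is an injective $S$-module with one-dimensional socle $l$, so $\Hom_R(S,E_R(k))\cong E_S(l)$ and the functor $(-)^\vee:=\Hom_R(-,E_R(k))$ restricts to Matlis duality over $S$ on the category of $S$-modules.

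\textbf{The artinian case.} Here $M^\vee$ is a non-zero finitely generated $S$-module, and from $\Tor^R_i(k,M^\vee)\cong\Ext^i_R(k,M)^\vee$ together with the fact that the injective dimension of an artinian module is detected by $\Ext^\bullet_R(k,-)$, one gets $\fd_R(M^\vee)=\id_R(M)\le r$. Then \cite[Theorem~3.1]{BIK} applies to $M^\vee$: it is free over $S$, $f$ is an exceptional complete intersection map, and $\fd_R(M^\vee)=r=\fd_R(S)$. Dualizing back, $M\cong(M^\vee)^\vee\cong E_S(l)^{\type_S(M)}$, which is (i); and (ii) and (iii) are immediate from \cite[Theorem~3.1]{BIK}.

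\textbf{The finitely generated case.} Now $M^\vee$ is artinian rather than finitely generated, so \cite[Theorem~3.1]{BIK} does not apply directly; this is where I expect the real work, and I would handle it as the injective-dimension analogue of the core of \cite{BIK}. By Bass's formula $\id_R(M)=\depth R$, so $\depth R\le r$. The crux is to prove that $f$ is a complete intersection map: for this I would feed the change-of-rings spectral sequence
\[
E_2^{p,q}=\Ext^p_S\!\big(\Tor^R_q(S,k),M\big)=\Ext^p_S(l,M)^{\beta_q}\ \Longrightarrow\ \Ext^{p+q}_R(k,M),\qquad\beta_q:=\dim_k\Tor^R_q(S,k),
\]
whose abutment vanishes above total degree $\depth R$, into the André--Quillen homology criterion for complete intersection homomorphisms (Avramov's theorem together with Quillen rigidity) --- this being precisely where the injective-dimension bound does its work, exactly as the flat-dimension bound is exploited in \cite{BIK} --- to conclude that $D_2(S/R;l)=0$ and hence that $f$ is complete intersection, say $I=(\underline x)$ with $\underline x$ a regular sequence of length $c=\mu(I)$. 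Then $\depth S=\depth R-c=\id_R(M)-c$ while $c=\mu(I)\ge r\ge\id_R(M)$, which forces $\depth S=0$ and $\mu(I)=r=\id_R(M)$; in particular $f$ is exceptional and $\fd_R(S)=\pd_R(S)=c=r$. Finally, since $f$ is a complete intersection surjection its Koszul complex resolves $S$, so $\beta_q=\binom cq$ and the spectral sequence degenerates at $E_2$; the vanishing of its abutment above degree $r$ then forces $\Ext^p_S(l,M)=0$ for all $p\ge1$, i.e.\ $M$ is injective over $S$ --- so $M\cong E_S(l)^{\type_S(M)}$, and $S$ is in fact artinian --- while $\Ext^q_R(k,M)\cong\Hom_S(l,M)^{\binom rq}$ is non-zero for $0\le q\le r$, i.e.\ $\id_R(M)=r$. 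This gives (i)--(iii). (Alternatively, once $S$ is known to be artinian $M$ has finite length, $M^\vee$ is finitely generated over $S$, and one may simply apply \cite[Theorem~3.1]{BIK} to $M^\vee$ as in the artinian case.)

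\textbf{The last assertion.} If $M$ is injective over $R$ then $\id_R(M)=0$, so (iii) gives $\edim R=\edim S$ and $\fd_R(S)=0$; the latter says $S$ is flat over $R$, i.e.\ $f$ is flat.
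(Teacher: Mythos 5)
Your artinian case is essentially the paper's own argument (Theorem 3.3): complete $S$, Matlis-dualize so that $M^{\vee}$ is a finitely generated $S$-module with $\fd_{R}(M^{\vee})=\id_{R}(M)$, and invoke \cite[Theorem 3.1]{BIK}. That half is fine, though the paper does not (and need not) make $f$ surjective there, and your reduction of an \emph{artinian} module along the flat part of a Cohen factorization would itself require justification: the inequality $\id_{R'}(M)\leq \id_{R}(M)+\edim(R'/\mathfrak{m}R')$ is established in the paper (Lemma 3.11) only for finitely generated modules, and its proof is not a formality (it needs a spectral sequence and a careful bimodule discussion).

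The finitely generated case, however, has a genuine gap exactly at its crux. You assert that feeding the change-of-rings spectral sequence $\Ext^{p}_{S}(l,M)^{\beta_{q}}\Rightarrow \Ext^{p+q}_{R}(k,M)$ ``into the Andr\'e--Quillen homology criterion'' yields $D_{2}(S|R;l)=0$, hence that $f$ is complete intersection --- but no argument is given, and none is visible: the $E_{2}$-page involves only $\beta_{q}=\rank_{k}\Tor^{R}_{q}(S,k)$ and $\Ext^{p}_{S}(l,M)$, about neither of which you know anything a priori ($\fd_{R}(S)$ and $\id_{S}(M)$ could both be infinite, and even $\Hom_{S}(l,M)\neq 0$ is unknown since $\depth_{S}(\mathfrak{n},M)$ need not vanish), so there is no corner term to exploit and no visible passage from the vanishing of the abutment in total degree $>r$ to the vanishing of $D_{2}(S|R;l)$. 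Everything downstream (the computation $\depth(S)=\depth(R)-c$, the inequality $c\geq r$, the degeneration of the spectral sequence, and the alternative ``once $S$ is artinian'' route) rests on this unproved claim, which is the whole theorem. The paper takes an entirely different and more elementary route here: for faithful $M$ it inducts on $\edim(R)-\edim(S)$, using the Levin--Vasconcelos theorem (a finitely generated module of finite injective dimension has annihilator either zero or containing a non-zerodivisor) together with prime avoidance to produce a non-zerodivisor $a\in\Ker(f)\setminus\mathfrak{m}^{2}$, and passes to $R/\langle a\rangle$, where both $\id_{R}(M)$ and $\edim(R)$ drop by one; the non-faithful case is then reduced to the faithful one by a Jacobi--Zariski/Andr\'e--Quillen argument showing $\ann_{S}(M)=0$. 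If you wish to keep the spectral-sequence strategy, you must actually supply the proof that $f$ is complete intersection.
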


We further prove an analogue of \cite[Theorem 3.1]{BIK} for artinian modules; see Corollary \ref{3.13}. Along the way, we collect a few lemmas with suitable references for the convenience of the reader, and also establish a handful of lemmas which might be of independent interest.

\section{Basic Definitions and Observations}

In what follow, all rings are assumed to be commutative with unity. Given a local ring $(R,\mathfrak{m},k)$, $\mathfrak{m}$ denotes its unique maximal ideal, $k\cong R/ \mathfrak{m}$ is its residue field, and $\widehat{R}$ designates its $\mathfrak{m}$-adic completion. A ring homomorphism $f:(R,\mathfrak{m}) \rightarrow (S,\mathfrak{n})$ is local if $f(\mathfrak{m})\subseteq \mathfrak{n}$. Given an $R$-module $M$, the notations $\pd_{R}(M)$, $\id_{R}(M)$, and $\fd_{R}(M)$ are used for the projective, injective, and flat dimensions of $M$, respectively. Moreover, $\dim(R)$ and $\edim(R)$ indicate the Krull and the embedding dimensions of $R$, respectively.

We begin with a remark on Andr\'{e}-Quillen homology.

\begin{remark} \label{2.1}
Let $f:R\rightarrow S$ be a ring homomorphism. By developing the machinery of non-abelian homological algebra in the framework of Quillen's model category theory, one can construct the cotangent complex $\mathbb{L}^{S|R}$ associated to $f$ as a well-defined object in the derived category $\mathcal{D}(S)$. Given any $S$-module $M$, the cotangent complex $\mathbb{L}^{S|R}$ is used to define the Andr\'{e}-Quillen homology modules of $S$ over $R$ with coefficients in $M$ as
$$D_{i}(S|R;M):= \Tor_{i}^{S}\left(\mathbb{L}^{S|R},M\right)$$
for every $i\geq 0$. For any $i\geq 0$, the Andr\'{e}-Quillen homology $D_{i}(S|R;M)$ is functorial in all three arguments. One further has
$$D_{0}(S|R;M) \cong \Omega_{S|R}\otimes_{S}M$$
where $\Omega_{S|R}$ is the module of K\"{a}hler differentials of $S$ over $R$. If $f$ is surjective with $\Ker(f)=\mathfrak{a}$, then $D_{0}(S|R;M)=0$ and $$D_{1}(S|R;M) \cong (\mathfrak{a}/ \mathfrak{a}^{2})\otimes_{S}M.$$

Given ring homomorphisms $R\rightarrow S\rightarrow T$ and a $T$-module $M$, one has the following Jacobi-Zariski exact sequence of $T$-modules:
\small
\begin{gather*}
\cdots \rightarrow D_{1}\left(S|R;M \right)\rightarrow D_{1}\left(T|R;M \right) \rightarrow D_{1}\left(T|S;M \right)\rightarrow D_{0}\left(S|R;M \right)\rightarrow D_{0}\left(T|R;M \right)\rightarrow D_{0}\left(T|S;M \right) \rightarrow 0
\end{gather*}
\normalsize

If $f:(R,\mathfrak{m},k) \rightarrow S$ is a local epimorphism of noetherian local rings with $\Ker(f)=\mathfrak{a}$, then
$$D_{1}(S|R;k) \cong (\mathfrak{a}/ \mathfrak{a}^{2})\otimes_{S}k \cong \mathfrak{a} / \mathfrak{m}\mathfrak{a},$$
so the minimum number of generators of $\mathfrak{a}$ can be computed as
$$\mu(\mathfrak{a})= \rank_{k}(\mathfrak{a}/ \mathfrak{m}\mathfrak{a}) = \rank_{k}\left(D_{1}(S|R;k)\right),$$
which in particular yields
$$\edim(R)=\mu(\mathfrak{m})= \rank_{k}\left(D_{1}(k|R;k)\right).$$

It is worth noting that the general theory of Andr\'{e}-Quillen homology uses simplicial resolutions; see \cite{An}, \cite{GS}, \cite{Iy}, \cite{Qu1}, \cite{Qu2}, and \cite{Qu3}. However, the lower Andr\'{e}-Quillen homologies $D_{i}(S|R;M)$ for $i=0,1,2$ can be constructed without any appeal to simplicial resolutions; see \cite{LS}.
\end{remark}

Using Andr\'{e}-Quillen homology, one can prove the following lemma.

\begin{lemma} \label{2.2}
Let $f:(R,\mathfrak{m}) \rightarrow S$ be a flat local homomorphism of noetherian local rings whose closed fiber $S/ \mathfrak{m}S$ is regular. Then we have:
$$\edim\left(S/ \mathfrak{m}S\right)= \edim(S)- \edim(R)= \dim(S)- \dim(R)$$
\end{lemma}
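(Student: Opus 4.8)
The plan is to prove the two displayed equalities separately. The equality $\edim(S/\mathfrak{m}S)=\dim(S)-\dim(R)$ is the classical one: since $f$ is flat and local, the dimension formula for flat local homomorphisms gives $\dim(S)=\dim(R)+\dim(S/\mathfrak{m}S)$, and since $S/\mathfrak{m}S$ is regular we have $\dim(S/\mathfrak{m}S)=\edim(S/\mathfrak{m}S)$; combining the two yields the claim. So the real content is the equality $\edim(S/\mathfrak{m}S)=\edim(S)-\edim(R)$, which I would extract from the Jacobi--Zariski exact sequence together with the facts recorded in Remark \ref{2.1}.

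Set $A:=S/\mathfrak{m}S$; this is a regular local ring whose residue field is $l$, the residue field of $S$, and locality of $f$ ensures $\mathfrak{m}S\subseteq\mathfrak{n}$. Because $A$ is regular, the kernel of the surjection $A\rightarrow l$ is its maximal ideal, generated by a regular sequence, so $A\rightarrow l$ is a complete intersection homomorphism and hence $D_{i}(l|A;l)=0$ for all $i\geq 2$ by a standard property of Andr\'{e}--Quillen homology; in particular $D_{2}(l|A;l)=0$. Feeding the tower $S\rightarrow A\rightarrow l$ into the Jacobi--Zariski exact sequence with coefficients in $l$ gives
$$D_{2}(l|A;l)\rightarrow D_{1}(A|S;l)\rightarrow D_{1}(l|S;l)\rightarrow D_{1}(l|A;l)\rightarrow D_{0}(A|S;l),$$
where the first term vanishes by the previous remark and $D_{0}(A|S;l)=\Omega_{A|S}\otimes_{A}l=0$ because $S\rightarrow A$ is surjective. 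Thus there is a short exact sequence of $l$-vector spaces
$$0\rightarrow D_{1}(A|S;l)\rightarrow D_{1}(l|S;l)\rightarrow D_{1}(l|A;l)\rightarrow 0.$$
By Remark \ref{2.1}, $\rank_{l}D_{1}(l|S;l)=\edim(S)$ and $\rank_{l}D_{1}(l|A;l)=\edim(A)$, whereas $D_{1}(A|S;l)\cong(\mathfrak{m}S/\mathfrak{m}^{2}S)\otimes_{S}l\cong\mathfrak{m}S/\mathfrak{n}\mathfrak{m}S$, so $\rank_{l}D_{1}(A|S;l)=\mu_{S}(\mathfrak{m}S)$, the minimal number of generators of the $S$-module $\mathfrak{m}S$. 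Counting dimensions in the short exact sequence gives
$$\edim(S)=\mu_{S}(\mathfrak{m}S)+\edim(S/\mathfrak{m}S).$$

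To finish I would identify $\mu_{S}(\mathfrak{m}S)$ with $\edim(R)$, and this is where flatness re-enters. Tensoring $0\rightarrow\mathfrak{m}\rightarrow R\rightarrow k\rightarrow 0$ with the flat module $S$ shows $\mathfrak{m}S\cong\mathfrak{m}\otimes_{R}S$, whence $\mathfrak{m}S/\mathfrak{m}^{2}S\cong\mathfrak{m}\otimes_{R}(S/\mathfrak{m}S)\cong(\mathfrak{m}/\mathfrak{m}^{2})\otimes_{k}(S/\mathfrak{m}S)$ is a free $(S/\mathfrak{m}S)$-module of rank $\edim(R)$; therefore $\mu_{S}(\mathfrak{m}S)=\mu_{S/\mathfrak{m}S}(\mathfrak{m}S/\mathfrak{m}^{2}S)=\edim(R)$. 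Substituting this into the last displayed identity gives $\edim(S/\mathfrak{m}S)=\edim(S)-\edim(R)$, which together with the first paragraph completes the proof. The only steps with any real content are the vanishing $D_{2}(l|A;l)=0$ — precisely the point at which regularity of the closed fiber is used — and the flatness computation pinning down $\mu_{S}(\mathfrak{m}S)$; everything else is routine bookkeeping with the Jacobi--Zariski sequence and Remark \ref{2.1}.
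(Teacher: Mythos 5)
Your argument is correct. Note that the paper itself gives no proof of Lemma \ref{2.2}: it simply cites \cite[2.6]{BIK}, so there is nothing to compare line by line; what you have written is a self-contained substitute for that reference. Your reduction is sound: the dimension equality $\dim(S)=\dim(R)+\dim(S/\mathfrak{m}S)$ for flat local maps plus $\dim=\edim$ on the regular fiber handles the second equality, and the Jacobi--Zariski sequence for $S\rightarrow S/\mathfrak{m}S\rightarrow l$ together with the identification $\mu_{S}(\mathfrak{m}S)=\edim(R)$ (via $\mathfrak{m}S\cong\mathfrak{m}\otimes_{R}S$, which is exactly where flatness is used) handles the first. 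Two small points worth flagging. First, you need the Jacobi--Zariski sequence one step further to the left than Remark \ref{2.1} records it, namely the segment $D_{2}(l|A;l)\rightarrow D_{1}(A|S;l)\rightarrow D_{1}(l|S;l)$, plus the vanishing $D_{i}(l|A;l)=0$ for $i\geq 2$ when the kernel of $A\rightarrow l$ is generated by a regular sequence; both are standard (the cotangent complex of a complete intersection surjection is $\Sigma(\mathfrak{a}/\mathfrak{a}^{2})$), but you should cite them, e.g.\ \cite{Iy} or \cite{An}, since the paper's remark does not supply them. Second, your identification $\mu_{S}(\mathfrak{m}S)=\mu_{S/\mathfrak{m}S}(\mathfrak{m}S/\mathfrak{m}^{2}S)$ implicitly uses $\mathfrak{m}^{2}S\subseteq\mathfrak{n}\mathfrak{m}S$, which holds because $f$ is local; it would be cleaner to compute $\mathfrak{m}S\otimes_{S}l\cong\mathfrak{m}\otimes_{R}l\cong(\mathfrak{m}/\mathfrak{m}^{2})\otimes_{k}l$ directly. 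Neither point is a gap. A byproduct of your argument worth noting is that it isolates precisely where regularity of the closed fiber matters (the vanishing of $D_{2}(l|A;l)$); without it the map $D_{1}(A|S;l)\rightarrow D_{1}(l|S;l)$ need not be injective and the equality $\edim(S)-\edim(R)=\edim(S/\mathfrak{m}S)$ genuinely fails, as one sees from $k[[t]]\rightarrow k[[x]]$, $t\mapsto x^{2}$.
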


\begin{proof}
See \cite[2.6]{BIK}.
\end{proof}

We next recall the notion of an exceptional complete intersection map.

\begin{remark} \label{2.3}
Let $f:(R,\mathfrak{m},k) \rightarrow S$ be a local homomorphism of noetherian local rings. Then $f$ admits a Cohen factorization, i.e. it fits into a commutative diagram
\begin{equation*}
  \begin{tikzcd}
  R \arrow{r}{f} \arrow{d}[swap]{\dot{f}} & S \arrow{d}
  \\
  R' \arrow{r}{f'} & \widehat{S}
\end{tikzcd}
\end{equation*}

\noindent
of noetherian local rings in which $\dot{f}$ is flat with regular closed fiber $R'/ \mathfrak{m}R'$ and $f'$ is surjective; see \cite[Theorem 1.1]{AFH}. Accordingly, $f$ is said to be a \textit{complete intersection map} if there is a Cohen factorization of $f$ such that $\Ker(f')$ is generated by a regular sequence on $R'$. This property is independent of the choice of the Cohen factorization; see \cite[Theorem 1.2]{Av}. In particular, $f$ is complete intersection if and only if $f'$ is complete intersection. Moreover, the class of complete intersection maps is closed under composition and flat base change. When $f$ is complete intersection, there is an inequality
$$\edim(R)-\dim(R) \leq \edim(S)-\dim(S);$$
see \cite[Lemma 2.13]{BIK}. Then $f$ is said to be an \textit{exceptional complete intersection map} if equality holds above. When $f$ is surjective, this property is equivalent to the condition that $\Ker(f)$ can be generated by a regular sequence whose image in $\mathfrak{m}/ \mathfrak{m}^{2}$ is a linearly independent set over $k$; see \cite[Lemma 3.2]{ILP}. It follows from Lemma \ref{2.2} that $f$ is exceptional complete intersection if and only if $f'$ is exceptional complete intersection. For more information on (exceptional) complete intersection maps, refer to \cite{Av}, \cite{BIK}, \cite{ILP}, and \cite{BILP}.
\end{remark}

The following lemma will be used later.

\begin{lemma} \label{2.4}
Let $f:R \rightarrow S$ be a surjective exceptional complete intersection map, $k$ the residue field of $S$, and $\pi:S\rightarrow k$ the quotient map. Then $D_{1}(\pi|R;k): D_{1}(S|R;k) \rightarrow D_{1}(k|R;k)$ is injective.
\end{lemma}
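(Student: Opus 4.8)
The plan is to run the Jacobi--Zariski exact sequence of the composite $R \to S \xrightarrow{\pi} k$ with coefficients in $k$ through a dimension count over $k$, feeding in the characterization of surjective exceptional complete intersection maps recalled in Remark \ref{2.3}.

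First, set $\mathfrak{a} = \Ker(f)$. By Remark \ref{2.3}, since $f$ is a surjective exceptional complete intersection map, $\mathfrak{a}$ is generated by a regular sequence $x_{1}, \dots, x_{c}$ whose images $\bar{x}_{1}, \dots, \bar{x}_{c}$ in $\mathfrak{m}/\mathfrak{m}^{2}$ are linearly independent over $k$. I would extract two numerical consequences. First, the $x_{i}$ span $\mathfrak{a}/\mathfrak{m}\mathfrak{a}$ over $k$ by Nakayama, and they map, under the natural map $\mathfrak{a}/\mathfrak{m}\mathfrak{a} \to \mathfrak{m}/\mathfrak{m}^{2}$ (well-defined since $\mathfrak{a} \subseteq \mathfrak{m}$, whence $\mathfrak{m}\mathfrak{a} \subseteq \mathfrak{m}^{2}$), onto the linearly independent set $\bar{x}_{1}, \dots, \bar{x}_{c}$; hence the $x_{i}$ form a $k$-basis of $\mathfrak{a}/\mathfrak{m}\mathfrak{a}$, so $\mu(\mathfrak{a}) = c$ and therefore, by Remark \ref{2.1}, $\rank_{k}\bigl(D_{1}(S|R;k)\bigr) = \rank_{k}(\mathfrak{a}/\mathfrak{m}\mathfrak{a}) = c$. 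Second, the image of $\mathfrak{a}$ in $\mathfrak{m}/\mathfrak{m}^{2}$ is the $c$-dimensional span of the $\bar{x}_{i}$, and since $f$ is surjective, $\mathfrak{n}/\mathfrak{n}^{2}$ is the quotient of $\mathfrak{m}/\mathfrak{m}^{2}$ by that span, so $\edim(S) = \edim(R) - c$; equivalently, $\rank_{k}\bigl(D_{1}(k|R;k)\bigr) - \rank_{k}\bigl(D_{1}(k|S;k)\bigr) = c$ by Remark \ref{2.1}. (Alternatively, this last equality follows from the defining equality $\edim(R) - \dim(R) = \edim(S) - \dim(S)$ of exceptional complete intersection together with $\dim(R) - \dim(S) = c$, the latter because $\mathfrak{a}$ is generated by a length-$c$ regular sequence.)

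Next I would write out the Jacobi--Zariski exact sequence of Remark \ref{2.1} for $R \to S \xrightarrow{\pi} k$ with coefficients in $k$; the map $D_{1}(S|R;k) \to D_{1}(k|R;k)$ occurring in it is precisely $D_{1}(\pi|R;k)$, and $D_{0}(S|R;k) = 0$ since $f$ is surjective. This leaves the exact sequence
\[
D_{1}(S|R;k) \xrightarrow{\,D_{1}(\pi|R;k)\,} D_{1}(k|R;k) \xrightarrow{\,\beta\,} D_{1}(k|S;k) \longrightarrow 0 ,
\]
so $\beta$ is surjective and $\im\bigl(D_{1}(\pi|R;k)\bigr) = \Ker(\beta)$. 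Counting dimensions, $\rank_{k}\bigl(\Ker(\beta)\bigr) = \rank_{k}\bigl(D_{1}(k|R;k)\bigr) - \rank_{k}\bigl(D_{1}(k|S;k)\bigr) = c = \rank_{k}\bigl(D_{1}(S|R;k)\bigr)$; hence $D_{1}(\pi|R;k)$ is a $k$-linear map whose image has the same dimension as its source, and is therefore injective.

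No serious obstacle arises here: once the Jacobi--Zariski sequence and the equality $\edim(S) = \edim(R) - \mu(\Ker f)$ for surjective exceptional complete intersection maps are in hand, the proof is pure bookkeeping over $k$. The only slightly delicate point is the passage from ``the $\bar{x}_{i}$ are linearly independent in $\mathfrak{m}/\mathfrak{m}^{2}$'' to the two numerical facts $\mu(\mathfrak{a}) = c$ and $\edim(S) = \edim(R) - c$, and this is elementary linear algebra in $\mathfrak{m}/\mathfrak{m}^{2}$ combined with Nakayama's lemma.
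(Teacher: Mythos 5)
Your proof is correct. Note that the paper itself offers no argument for this lemma: it simply defers to the proof of \cite[Lemma 2.16]{BIK}, so your write-up is a genuine, self-contained replacement rather than a reproduction. The two numerical inputs are both sound: the generators $x_{1},\dots,x_{c}$ of $\mathfrak{a}=\Ker(f)$ have linearly independent images in $\mathfrak{m}/\mathfrak{m}^{2}$, hence linearly independent (and spanning, by Nakayama) images in $\mathfrak{a}/\mathfrak{m}\mathfrak{a}$, giving $\rank_{k}\left(D_{1}(S|R;k)\right)=\mu(\mathfrak{a})=c$; and since the image of $\mathfrak{a}$ in $\mathfrak{m}/\mathfrak{m}^{2}$ is exactly the $c$-dimensional span of the $\bar{x}_{i}$ (products $rx_{i}$ with $r\in\mathfrak{m}$ land in $\mathfrak{m}^{2}$), one gets $\edim(S)=\edim(R)-c$. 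Feeding these into the Jacobi--Zariski sequence, whose first map is $D_{1}(\pi|R;k)$ by functoriality (exactly the identification the paper itself uses in the proof of Theorem \ref{3.12}), the rank count forces injectivity. One could argue even more directly by identifying $D_{1}(S|R;k)\to D_{1}(k|R;k)$ with the natural map $\mathfrak{a}/\mathfrak{m}\mathfrak{a}\to\mathfrak{m}/\mathfrak{m}^{2}$, whose injectivity is precisely the exceptional complete intersection condition; your dimension count has the advantage of never needing to identify that map explicitly, only to know the sequence is exact. Either way, the argument stands.
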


\begin{proof}
Follows from the proof of \cite[Lemma 2.16]{BIK}.
\end{proof}

\section{Main Results}

In this section, we prove our main result, i.e. the theorem outlined in the introduction. We first handle the artinian case.

\begin{lemma} \label{3.1}
Let $(R,\mathfrak{m},k)$ be a noetherian local ring, and $M$ a non-zero finitely generated or artinian $R$-module. Let $E_{R}(k)$ denote the injective envelope of $k$, and $\type_{R}(M)= \rank_{k}\left(\Ext_{R}^{\depth_{R}(\mathfrak{m},M)}(k,M)\right)$. Then $M$ is an injective $R$-module if and only if $M\cong E_{R}(k)^{\type_{R}(M)}$.
\end{lemma}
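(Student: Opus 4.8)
The plan is to prove both directions by reducing to a statement about the minimal injective resolution, or rather by exploiting the structure theory of injective modules over a noetherian ring, handling the finitely generated and artinian cases in a uniform way through Matlis duality over the completion. The "if" direction is the trivial one: if $M \cong E_R(k)^{t}$ with $t = \type_R(M)$, then $M$ is a direct sum of copies of $E_R(k)$, which is injective, and direct sums of injectives over a noetherian ring are injective, so $M$ is an injective $R$-module. (One should also check the exponent is consistent, i.e. that $\type_R(E_R(k)^t)=t$, but since $\Hom_R(k,E_R(k))\cong k$ and $\depth_R(\mathfrak m, E_R(k)^t)=0$, this is immediate.)

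For the "only if" direction, suppose $M$ is a non-zero injective $R$-module. Since $R$ is noetherian, Matlis's structure theorem gives a decomposition $M \cong \bigoplus_{\mathfrak p \in \Spec R} E_R(R/\mathfrak p)^{(\mu_\mathfrak p)}$ for suitable (possibly infinite) cardinals $\mu_\mathfrak p$. The heart of the argument is to show that the only prime that can occur is $\mathfrak m$. First I would treat the artinian case: an artinian module is supported only at $\mathfrak m$, so $\Supp_R(M) \subseteq \{\mathfrak m\}$, which forces $\mu_\mathfrak p = 0$ for all $\mathfrak p \neq \mathfrak m$; hence $M \cong E_R(k)^{(\mu_\mathfrak m)}$, and the exponent $\mu_\mathfrak m$ must be finite because $E_R(k)$ is not artinian unless... — more precisely, $\mu_\mathfrak m = \rank_k \Hom_R(k,M) = \type_R(M)$ since $\depth_R(\mathfrak m, M) = 0$ for a non-zero module with $\mathfrak m$ as an associated prime (and for injective $M$, $\mathfrak m \in \Ass_R(M)$ as $M\supseteq E_R(k)$ forces a copy; if not, one argues $\depth=0$ directly from $M$ being Matlis-reflexive-dual-ish). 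For the finitely generated case, a non-zero finitely generated injective module $M$: here I would invoke that a finitely generated module over a noetherian local ring that is injective must have $\depth_R(\mathfrak m,M)=0$ (Bass numbers argument) hence $\mathfrak m \in \Ass_R(M)$, and then show directly that $M$ being finitely generated and injective forces $\dim R = 0$ and $M \cong E_R(k)^{t}$ — indeed a finitely generated injective module over a local ring is faithfully the Matlis dual of a finitely generated module over $\widehat R$, and finite generation on both sides pins down $\widehat R$ to be artinian, giving $E_R(k) = E_{\widehat R}(k)$ finitely generated and $M$ a finite direct sum of copies.

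The cleanest uniform route, which I would actually carry out, is the following: pass to the completion $\widehat R$ and use Matlis duality. Both finitely generated and artinian $R$-modules are Matlis-reflexive over $\widehat R$ (for finitely generated modules one uses that $M^{\vee}:=\Hom_R(M,E_R(k))$ is artinian and $M^{\vee\vee}\cong \widehat M = M\otimes_R \widehat R$, and the injective $R$-module $M$ is also injective over $\widehat R$ after completing — here finite generation or artinian-ness is what makes the completion behave well). Then $M$ injective over $\widehat R$ is equivalent to $M^{\vee}$ being flat over $\widehat R$; since $M^{\vee}$ is a finitely generated $\widehat R$-module (artinian$^\vee$ is f.g., f.g.$^\vee$ is artinian but then $M^{\vee\vee}$ f.g. forces $M^\vee$ ... ), flatness plus finite generation over the local ring $\widehat R$ forces $M^{\vee}$ to be free, say $M^{\vee}\cong \widehat R^{\,t}$. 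Dualizing back, $M \cong (\widehat R^{\,t})^{\vee} = E_R(k)^{t}$. Finally, comparing with the definition, $t = \rank_k \Hom_{\widehat R}(k, \widehat R^t) $... rather $t = \rank_k(k\otimes_{\widehat R}\widehat R^t) = \rank_k \Hom_R(k, E_R(k)^t) = \type_R(M)$, using that $\depth_R(\mathfrak m, M)=0$ here (as $\Hom_R(k,M)\neq 0$).

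The main obstacle I anticipate is the bookkeeping needed to run Matlis duality cleanly across \emph{both} the finitely generated and artinian cases at once, specifically verifying that a non-zero injective module which is finitely generated or artinian stays injective after base change to $\widehat R$, and that the relevant duals are finitely generated so that flat implies free. Once the finite-generation of $M^{\vee}$ over $\widehat R$ is secured in each case, the rest is formal. I would package the $\depth$-zero observation — needed to identify the exponent with $\type_R(M)$ — as the statement that for a non-zero injective module $M$ over a noetherian local ring, $\mathfrak m\in\Ass_R(M)$, which follows from the fact that the zeroth Bass number $\mu^0(\mathfrak m,M)\neq 0$ is equivalent to $\mathfrak m\in\Ass_R(M)$ together with $M$ being a direct summand of its own injective envelope's decomposition; alternatively cite the standard fact directly.
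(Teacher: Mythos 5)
Your overall strategy is sound and your first route (Matlis's structure theorem for injective modules plus the observation that $\depth_R(\mathfrak m,M)=0$) can be completed, but it is considerably more elaborate than the paper's argument, which simply cites the fact that a non-zero finitely generated injective module forces $R$ to be artinian (\cite[Exercise 3.1.23]{BH}), concludes that $M$ is artinian in either case, hence $\Ass_R(M)=\{\mathfrak m\}$ and $\depth_R(\mathfrak m,M)=0$, and then reads off $M\cong E_R(M)\cong E_R(k)^{\mu_0^R(\mathfrak m,M)}=E_R(k)^{\type_R(M)}$ from the decomposition of the injective envelope. Your artinian case is fine ($\Supp_R(M)\subseteq\{\mathfrak m\}$ kills all primes $\mathfrak p\neq\mathfrak m$ in the decomposition, and the socle of a non-zero artinian module is a non-zero finite-dimensional $k$-space, which identifies the exponent with $\type_R(M)$).

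The genuine problem is in the ``uniform Matlis duality route'' that you say you would actually carry out. There you need $M^{\vee}$ to be a finitely generated $\widehat R$-module in order to pass from flat to free, and for finitely generated $M$ your justification trails off into the inference that $M^{\vee\vee}$ finitely generated forces $M^{\vee}$ finitely generated. That inference is false: for $M=R$ with $R$ complete and not artinian, $M^{\vee}=E_R(k)$ is artinian but not finitely generated while $M^{\vee\vee}\cong R$ is finitely generated. What you must use is precisely the injectivity of $M$: a non-zero finitely generated injective module forces $\dim R=0$ (equivalently, $M$ has finite length), and only then is $M^{\vee}$ finitely generated. You do gesture at this fact elsewhere (``finite generation on both sides pins down $\widehat R$ to be artinian''), and a correct proof is available (a finitely generated injective $M$ must contain $E_R(k)$ as a direct summand once $\mathfrak m\in\Ass_R(M)$ is known, so $E_R(k)$ is finitely generated and artinian, hence of finite length, hence $\widehat R\cong E_R(k)^{\vee}$ has finite length), but as written this step is asserted rather than proved; either supply that argument or cite \cite[Exercise 3.1.23]{BH} as the paper does, after which the duality bookkeeping goes through.
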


\begin{proof}
Suppose that $M$ is injective. If $M$ is finitely generated, then $R$ is artinian; see \cite[Exercise 3.1.23]{BH}. Thus in any case, $M$ is artinian, so $\Ass_{R}(M)=\{\mathfrak{m}\}$, whence $\depth_{R}(\mathfrak{m},M)=0$. But then the Bass number $\mu_{0}^{R}(\mathfrak{m},M)= \rank_{k}\left(\Ext_{R}^{0}(k,M)\right)= \type_{R}(M)$, so $M\cong E_{R}(M)\cong E_{R}(k)^{\type_{R}(M)}$. The converse is clear.
\end{proof}

\begin{lemma} \label{3.2}
Let $f:R\rightarrow S$ be a homomorphism of noetherian rings, $X$ an $S$-complex that is considered an $R$-complex via $f$, and $X^{\vee}= \Hom_{S}\left(X,E_{S}(l)\right)$ where $l$ is the residue field of $S$. Then the following assertions hold:
\begin{enumerate}
\item[(i)] $\id_{R}(X)=\fd_{R}(X^{\vee})$ provided $H_{i}(X)=0$ for $|i| \gg 0$.
\item[(ii)] $\fd_{R}(X)=\id_{R}(X^{\vee})$.
\end{enumerate}
\end{lemma}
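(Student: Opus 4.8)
The plan is to exploit the standard duality between flat and injective resolutions under the functor $(-)^{\vee} = \Hom_S(-, E_S(l))$, which is faithfully exact since $E_S(l)$ is a faithfully injective $S$-module. For part (i), assuming $H_i(X) = 0$ for $|i| \gg 0$, I would first reduce to the case where $X$ is a bounded-below complex of injective $R$-modules up to quasi-isomorphism: replace $X$ by a semi-injective resolution ${}^{R}I$ over $R$ (this is where boundedness of homology is needed, so that $\id_R(X)$ is finite precisely when this resolution can be taken bounded). Since $(-)^{\vee}$ is exact, $X^{\vee}$ is quasi-isomorphic to $({}^{R}I)^{\vee}$. The key point is then that if $J$ is an injective $R$-module, $J^{\vee} = \Hom_S(J, E_S(l))$ is a flat $R$-module: indeed, for any $R$-module $N$, $\Tor_i^R(J^{\vee}, N) \cong \Ext^i_R(N, J)^{\vee}$ by the duality isomorphism (adjunction plus exactness of $(-)^{\vee}$), and this vanishes for $i>0$ since $J$ is injective over $R$. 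Hence $({}^{R}I)^{\vee}$ is a bounded complex of flat $R$-modules lying in the appropriate degrees, giving $\fd_R(X^{\vee}) \leq \id_R(X)$. For the reverse inequality one runs the dual argument: the amplitude of a flat resolution of $X^{\vee}$ dualizes back, using that $(X^{\vee})^{\vee}$ retracts onto $X$ via the canonical biduality map, which is a split injection (in fact an isomorphism onto a direct summand in $\mathcal{D}(R)$) because $E_S(l)$ is faithfully injective.

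For part (ii), no boundedness hypothesis is needed, and the argument is cleaner. I would take a semi-flat resolution ${}^{R}F \xrightarrow{\simeq} X$ over $R$; again by exactness of $(-)^{\vee}$ we get $({}^{R}F)^{\vee} \simeq X^{\vee}$ in $\mathcal{D}(R)$. Now if $F$ is a flat $R$-module, then $F^{\vee}$ is an injective $R$-module: for any $R$-module $N$, $\Ext^i_R(N, F^{\vee}) \cong \Tor_i^R(N,F)^{\vee} = 0$ for $i>0$ by exactness of $(-)^{\vee}$ and flatness of $F$, so $F^{\vee}$ is injective. Therefore $({}^{R}F)^{\vee}$ is a complex of injective $R$-modules supported in the degrees opposite to those of ${}^{R}F$, yielding $\id_R(X^{\vee}) \leq \fd_R(X)$. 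The reverse inequality follows from biduality exactly as before: $X$ is a direct summand of $(X^{\vee})^{\vee}$ in $\mathcal{D}(R)$, so $\fd_R(X) = \fd_R\big(\text{summand of }(X^{\vee})^{\vee}\big) \leq \fd_R\big((X^{\vee})^{\vee}\big) \leq \id_R(X^{\vee})$, the last step being an instance of the inequality just proved applied to the $S$-complex $X^{\vee}$ in place of $X$ (here $(X^{\vee})^{\vee} = \Hom_S(X^{\vee}, E_S(l))$, and $X^{\vee}$ is again an $S$-complex).

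The main obstacle I anticipate is bookkeeping rather than conceptual: one must be careful about the homological-degree conventions (whether $\id$ and $\fd$ are measured via cohomological or homological indexing) so that the claimed equality of invariants — not merely an inequality up to a shift — comes out correctly, and one must track that the resolutions stay within the expected range of degrees. A secondary subtlety is making precise the sense in which $X$ is a summand of $(X^{\vee})^{\vee}$: this is the statement that the evaluation morphism $X \to (X^{\vee})^{\vee}$ is a split monomorphism in $\mathcal{D}(R)$, which holds because it is split already at the level of $S$-modules degreewise (a consequence of $E_S(l)$ being a faithfully injective cogenerator), and splittings are preserved by the forgetful functor to $\mathcal{D}(R)$. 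Once these points are nailed down, both parts reduce to the two elementary duality computations $\Tor_i^R(J^{\vee}, -) \cong \Ext^i_R(-, J)^{\vee}$ and $\Ext^i_R(-, F^{\vee}) \cong \Tor_i^R(-, F)^{\vee}$ together with the observation that $(-)^{\vee}$ interchanges injectivity and flatness of modules. For the citation-minded reader, this is essentially \cite[Theorem 4.1 and Remark 4.2]{Ishikawa} or the treatment of Matlis duality for complexes; I would cite the standard reference and keep the written proof to the skeleton above.
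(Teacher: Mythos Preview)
Your approach has a structural gap. You propose to take a semi-injective (resp.\ semi-flat) resolution ${}^{R}I$ (resp.\ ${}^{R}F$) of $X$ \emph{over $R$} and then apply $(-)^{\vee}=\Hom_{S}(-,E_{S}(l))$ to it. But such a resolution is only an $R$-complex, not an $S$-complex, so $\Hom_{S}({}^{R}I,E_{S}(l))$ is not defined; passing to an $R$-resolution destroys precisely the $S$-structure that $(-)^{\vee}$ requires. If instead you resolve $X$ over $S$, the terms are injective (resp.\ flat) over $S$ but not over $R$, and you learn nothing about $\id_{R}$ or $\fd_{R}$. A second, smaller issue concerns your reverse inequality: the assertion that the biduality map $X\to (X^{\vee})^{\vee}$ is a split monomorphism degreewise is not justified. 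For a general $S$-module $M$ the map $M\to M^{\vee\vee}$ is only a \emph{pure} monomorphism, not a split one; faithful injectivity of $E_{S}(l)$ gives injectivity, and the retraction $M^{\vee\vee\vee}\to M^{\vee}$ gives purity, but neither yields a splitting.

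The paper avoids both difficulties by never resolving $X$. It resolves the \emph{test object} instead: for any ideal $\mathfrak{a}\subseteq R$ one takes a free resolution $L$ of $R/\mathfrak{a}$ over $R$ with finitely generated terms; then $\Hom_{R}(L,X)$ and $L\otimes_{R}X$ remain $S$-complexes (via the $S$-action on $X$), so $(-)^{\vee}$ applies and Hom evaluation and Hom--tensor adjunction give
\[
\Ext_{R}^{i}(R/\mathfrak{a},X)^{\vee}\cong \Tor_{i}^{R}(R/\mathfrak{a},X^{\vee})
\quad\text{and}\quad
\Tor_{i}^{R}(R/\mathfrak{a},X)^{\vee}\cong \Ext_{R}^{i}(R/\mathfrak{a},X^{\vee}).
\]
Faithful injectivity of $E_{S}(l)$ turns each isomorphism into a vanishing \emph{equivalence}, and the characterizations of $\id_{R}$ and $\fd_{R}$ by vanishing of $\Ext$ and $\Tor$ against cyclic modules (\cite[Corollaries 2.5.I and 2.5.F]{AF}) then yield the equality in one stroke, with no biduality argument needed. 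These are exactly the ``elementary duality computations'' you list at the end; the repair is to apply them to the complex $X$ itself against cyclic $R$-modules, rather than termwise on an $R$-resolution of $X$.
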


\begin{proof}
(i): Let $\mathfrak{a}$ be an ideal of $R$, and $L$ a free resolution of $R/ \mathfrak{a}$ in which $L_{i}$ is finitely generated for every $i\geq 0$. Then using the Hom Evaluation (see \cite[Lemma 4.4 (I)]{AF}), we get
\begin{equation*}
\begin{split}
 \Ext_{R}^{i}(R/ \mathfrak{a},X)^{\vee} & = \Hom_{S}\left(\Ext_{R}^{i}(R/ \mathfrak{a},X),E_{S}(l)\right) \\
 & = \Hom_{S}\left(H_{-i}\left(\Hom_{R}(L,X)\right),E_{S}(l)\right) \\
 & \cong H_{i}\left(\Hom_{S}\left(\Hom_{R}(L,X),E_{S}(l)\right)\right) \\
 & \cong H_{i}\left(L\otimes_{R} \Hom_{S}\left(X,E_{S}(l)\right)\right) \\
 & = \Tor_{i}^{R}\left(R/ \mathfrak{a},\Hom_{S}\left(X,E_{S}(l)\right)\right) \\
 & = \Tor_{i}^{R}\left(R/ \mathfrak{a},X^{\vee}\right)
\end{split}
\end{equation*}
for every $i\in \mathbb{Z}$. Since $E_{S}(l)$ is faithfully injective, the isomorphism implies that for any $i\in \mathbb{Z}$, we have $\Ext_{R}^{i}(R/ \mathfrak{a},X)=0$ if and only if $\Tor_{i}^{R}\left(R/ \mathfrak{a},X^{\vee}\right)=0$. Now the result follows from \cite[Corollaries 2.5.I and 2.5.F]{AF}.

(ii): Let $\mathfrak{a}$ be an ideal of $R$, and $L$ a free resolution of $R/ \mathfrak{a}$. Then using the Hom-Tensor Adjunction, see \cite[A.2.8]{Ch}, we get
\begin{equation*}
\begin{split}
 \Tor_{i}^{R}(R/ \mathfrak{a},X)^{\vee} & = \Hom_{S}\left(\Tor_{i}^{R}(R/ \mathfrak{a},X),E_{S}(l)\right) \\
 & = \Hom_{S}\left(H_{i}\left(L\otimes_{R}X\right),E_{S}(l)\right) \\
 & \cong H_{-i}\left(\Hom_{S}\left(L\otimes_{R}X,E_{S}(l)\right)\right) \\
 & \cong H_{-i}\left(\Hom_{R}\left(L,\Hom_{S}\left(X,E_{S}(l)\right)\right)\right) \\
 & = \Ext_{R}^{i}\left(R/ \mathfrak{a},\Hom_{S}\left(X,E_{S}(l)\right)\right) \\
 & = \Ext_{R}^{i}\left(R/ \mathfrak{a},X^{\vee}\right)
\end{split}
\end{equation*}
for every $i\in \mathbb{Z}$. As in (i), the isomorphism implies that for any $i\in \mathbb{Z}$, we have $\Tor_{i}^{R}(R/ \mathfrak{a},X)=0$ if and only if $\Ext_{R}^{i}\left(R/ \mathfrak{a},X^{\vee}\right)=0$. Now the result follows from \cite[Corollaries 2.5.I and 2.5.F]{AF}.
\end{proof}

\begin{theorem} \label{3.3}
Let $f:(R,\mathfrak{m},k) \rightarrow (S,\mathfrak{n},l)$ be a local homomorphism of noetherian local rings, and $M$ a non-zero artinian $S$-module with $\id_{R}(M)\leq \edim(R)- \edim(S)$. Then the following assertions hold:
\begin{enumerate}
\item[(i)] $M$ is an injective $S$-module, i.e. $M\cong E_{S}(l)^{\type_{S}(M)}$.
\item[(ii)] $f$ is an exceptional complete intersection map.
\item[(iii)] $\id_{R}(M)= \edim(R)- \edim(S) = \fd_{R}(S)$.
\end{enumerate}
In particular, if $M$ is injective as an $R$-module, then $\edim(R)= \edim(S)$ and $f$ is flat.
\end{theorem}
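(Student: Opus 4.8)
The plan is to Matlis-dualize in order to reduce to the finitely generated case \cite[Theorem 3.1]{BIK}, accepting the replacement of $S$ by its completion. Write $(-)^{\vee}=\Hom_{S}(-,E_{S}(l))$ and put $N=M^{\vee}$. Since $M$ is a non-zero artinian $S$-module it is $\mathfrak{n}$-power torsion, hence acquires a natural $\widehat{S}$-module structure with the same submodule lattice, and $E_{S}(l)\cong E_{\widehat{S}}(l)$; Matlis duality over the complete local ring $\widehat{S}$ then shows that $N\cong\Hom_{\widehat{S}}\big(M,E_{\widehat{S}}(l)\big)$ is a non-zero finitely generated $\widehat{S}$-module with $N^{\vee}\cong M$. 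View $N$ as a module along the composite local homomorphism $g\colon R\xrightarrow{f}S\to\widehat{S}$. Applying Lemma \ref{3.2}(i) to the complex $X=M$ (which has bounded homology) and using $\edim(\widehat{S})=\edim(S)$, we get
$$\fd_{R}(N)=\id_{R}(M)\leq\edim(R)-\edim(S)=\edim(R)-\edim(\widehat{S}).$$

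Now invoke \cite[Theorem 3.1]{BIK} for $g$ and $N$: it gives that $N\cong\widehat{S}^{\,r}$ is free, that $g$ is an exceptional complete intersection map, and that $\fd_{R}(N)=\edim(R)-\edim(\widehat{S})=\fd_{R}(\widehat{S})$. Dualizing back yields $M\cong N^{\vee}\cong\big(\Hom_{S}(\widehat{S},E_{S}(l))\big)^{r}\cong E_{S}(l)^{r}$, where the last isomorphism holds because $\widehat{S}$ is flat over $S$, so $\Hom_{S}(\widehat{S},E_{S}(l))$ is an injective $\widehat{S}$-module with socle $\Hom_{\widehat{S}}\big(l,\Hom_{S}(\widehat{S},E_{S}(l))\big)\cong\Hom_{S}(l,E_{S}(l))\cong l$, hence isomorphic to $E_{\widehat{S}}(l)\cong E_{S}(l)$. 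Thus $M$ is injective over $S$, and Lemma \ref{3.1} forces $r=\type_{S}(M)$; this is (i). For (ii), a Cohen factorization $R\to R'\to\widehat{S}$ of $f$ is simultaneously one of $g$, so Remark \ref{2.3} (whereby $f$, $g$, and $f'$ are exceptional complete intersection together) transports the property from $g$ to $f$. For (iii), since $\widehat{S}$ is faithfully flat over $S$ one has $\Tor_{i}^{R}(R/\mathfrak{a},\widehat{S})\cong\Tor_{i}^{R}(R/\mathfrak{a},S)\otimes_{S}\widehat{S}$ for every ideal $\mathfrak{a}$ of $R$, whence $\fd_{R}(\widehat{S})=\fd_{R}(S)$, and therefore $\id_{R}(M)=\fd_{R}(N)=\edim(R)-\edim(\widehat{S})=\edim(R)-\edim(S)=\fd_{R}(S)$.

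Finally, if $M$ is $R$-injective then $\id_{R}(M)=0\leq\edim(R)-\edim(S)$, so the hypothesis is met and (iii) yields $\edim(R)=\edim(S)$ and $\fd_{R}(S)=0$, i.e. $f$ is flat. The only step demanding real care is (ii): \cite[Theorem 3.1]{BIK} delivers the exceptional complete intersection property for $g\colon R\to\widehat{S}$, not directly for $f$, so one must verify that a Cohen factorization of $f$ functions as a Cohen factorization of $g$ and then quote the base-independence statements recalled in Remark \ref{2.3}; everything else is bookkeeping, the substantive content being borrowed from \cite[Theorem 3.1]{BIK} and Lemma \ref{3.2}.
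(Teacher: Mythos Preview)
Your strategy is exactly the paper's: Matlis-dualize $M$ and invoke \cite[Theorem~3.1]{BIK}. The paper streamlines the bookkeeping by first replacing $S$ with $\widehat{S}$ (verifying that injectivity of $M$, $\edim$, and the exceptional complete intersection property all transfer), so that $S$-duality and $\widehat{S}$-duality coincide; it then reads off (i) via $\id_{S}(M)=\fd_{S}(M^{\vee})=0$ from Lemma~\ref{3.2}(i), rather than dualizing back explicitly. Your treatment of~(ii), making explicit that a Cohen factorization of $f$ is simultaneously one of $g$, is if anything more careful than the paper's.

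There is, however, a genuine slip in your duality step. You set $(-)^{\vee}=\Hom_{S}(-,E_{S}(l))$ and then assert $N^{\vee}\cong M$ ``by Matlis duality over $\widehat{S}$''; but Matlis duality over $\widehat{S}$ uses $\Hom_{\widehat{S}}(-,E_{\widehat{S}}(l))$, not $\Hom_{S}$. These do not agree on $N\cong\widehat{S}^{\,r}$ when $S$ is incomplete: applying $\Hom_{S}(-,E_{S}(l))$ to $0\to S\to\widehat{S}\to\widehat{S}/S\to 0$ shows that $\Hom_{S}(\widehat{S},E_{S}(l))$ surjects onto $E_{S}(l)$ with kernel $\Hom_{S}(\widehat{S}/S,E_{S}(l))$, which is nonzero by faithful injectivity of $E_{S}(l)$. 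Your justification ``injective $\widehat{S}$-module with socle $l$'' does not force the module to be $E_{\widehat{S}}(l)$, since indecomposable injectives $E_{\widehat{S}}(\widehat{S}/\mathfrak{p})$ with $\mathfrak{p}$ non-maximal have zero socle and could appear as extra summands. The repair is immediate: take the second dual over $\widehat{S}$, obtaining
\[
M\;\cong\;\Hom_{\widehat{S}}\bigl(N,E_{\widehat{S}}(l)\bigr)\;\cong\;\Hom_{\widehat{S}}\bigl(\widehat{S}^{\,r},E_{\widehat{S}}(l)\bigr)\;\cong\;E_{\widehat{S}}(l)^{r}\;\cong\;E_{S}(l)^{r},
\]
or simply pass to $\widehat{S}$ at the outset as the paper does.
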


\begin{proof}
If $M$ is an injective $S$-module, then by Lemma \ref{3.1}, $M \cong E_{S}(l)^{\type_{S}(M)}$, so Lemma \ref{3.2} (ii) implies that
$$\id_{R}(M)=\id_{R}\left(E_{S}(l)\right)=\id_{R}(S^{\vee})= \fd_{R}(S).$$
That is to say, the second equality in (iii) follows from (i).

We next reduce to the case where $S$ is $\mathfrak{n}$-adically complete. As $M$ is an artinian $S$-module, hence $\mathfrak{n}$-torsion, it has an $\widehat{S}$-module structure that restricts to its original $S$-module structure via the completion map $S\rightarrow \widehat{S}$, and $M\cong M\otimes_{S}\widehat{S}$ both as $S$-modules and $\widehat{S}$-modules; see \cite[Proposition 2.1.15 and Corollary 2.2.6]{SS}. Also, $M$ is an artinian $\widehat{S}$-module. We further note that $\edim\left(\widehat{S}\right)=\edim(S)$,
$$E_{\widehat{S}}\left(\widehat{S}/ \mathfrak{n}\widehat{S}\right) \cong E_{S}(S/ \mathfrak{n}) \cong E_{S}(l),$$
and
$$\type_{\widehat{S}}(M)= \type_{\widehat{S}}\left(M\otimes_{S}\widehat{S}\right)= \type_{S}(M).$$
By Lemma \ref{3.1}, $M$ is an injective $S$-module if and only if $M\cong E_{S}(l)^{\type_{S}(M)}$. Similarly, $M$ is an injective $\widehat{S}$-module if and only if $M\cong E_{\widehat{S}}\left(\widehat{S}/ \mathfrak{n}\widehat{S}\right)^{\type_{\widehat{S}}(M)}$. It follows that $M$ is an injective $S$-module if and only if $M$ is an injective $\widehat{S}$-module. Therefore, we can replace $S$ with $\widehat{S}$ and assume that $S$ is $\mathfrak{n}$-adically complete. Consequently, the Matlis duality theory implies that $M^{\vee}=\Hom_{S}\left(M,E_{S}(l)\right)$ is a finitely generated $S$-module.

Now by Lemma \ref{3.2} (i), we have
$$\fd_{R}(M^{\vee})=\id_{R}(M)\leq \edim(R)- \edim(S).$$
Therefore, \cite[Theorem 3.1]{BIK} implies that $M^{\vee}$ is a free $S$-module, $f$ is an exceptional complete intersection map, and
$$\id_{R}(M)= \fd_{R}(M^{\vee})= \edim(R)- \edim(S).$$
Furthermore, $\id_{S}(M)= \fd_{S}(M^{\vee})=0$, so $M$ is an injective $S$-module.
\end{proof}

We next need a series of lemmas before we can establish the finitely generated case of the theorem outlined in the introduction.

\begin{lemma} \label{3.4}
Let $f:(R,\mathfrak{m}) \rightarrow S$ be a flat local homomorphism of noetherian local rings, and $M$ a non-zero finitely generated $R$-module. Then we have:
$$\id_{S}(M\otimes_{R}S) = \id_{R}(M) + \id_{S/ \mathfrak{m}S}(S/ \mathfrak{m}S)$$
\end{lemma}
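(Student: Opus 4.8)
The plan is to prove the formula $\id_{S}(M\otimes_{R}S) = \id_{R}(M) + \id_{S/\mathfrak{m}S}(S/\mathfrak{m}S)$ by combining two standard tools for flat local homomorphisms: a base-change spectral sequence for $\Ext$, and the characterization of injective dimension via Bass numbers (vanishing of $\Ext$ against the residue field). Throughout, write $(R,\mathfrak{m},k)\to(S,\mathfrak{n},l)$, and set $N := M\otimes_{R}S$, which is a non-zero finitely generated $S$-module since $f$ is faithfully flat.

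First I would handle the easy inequality $\id_{S}(N)\leq \id_{R}(M)+\id_{S/\mathfrak{m}S}(S/\mathfrak{m}S)$. Because $f$ is flat, applying $-\otimes_{R}S$ to a minimal injective resolution of $M$ over $R$ is not quite enough (tensoring does not preserve injectives in general), so instead I would invoke the change-of-rings/base-change spectral sequence
\[
\operatorname{E}_{2}^{p,q}=\Ext_{S/\mathfrak{m}S}^{p}\!\left(l,\Ext_{R}^{q}(k,M)\otimes_{k}(S/\mathfrak{m}S)\right)\;\Longrightarrow\;\Ext_{S}^{p+q}(l,N),
\]
which is available precisely because $f$ is flat with closed fiber $S/\mathfrak{m}S$ (one derives it from the isomorphism $\mathbf{R}\Hom_{S}(l,N)\simeq \mathbf{R}\Hom_{S/\mathfrak{m}S}\!\big(l,\mathbf{R}\Hom_{R}(k,M)\otimes_{k}(S/\mathfrak{m}S)\big)$ via $l=k\otimes_{R}S$ and flatness). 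Since $\Ext_{R}^{q}(k,M)$ is a $k$-vector space, the inner term is just a direct sum of copies of $\Ext_{S/\mathfrak{m}S}^{p}(l,S/\mathfrak{m}S)$; the $\operatorname{E}_{2}$-page therefore vanishes for $p>\id_{S/\mathfrak{m}S}(S/\mathfrak{m}S)$ or $q>\id_{R}(M)$ (using that $\id_{R}(M)=\sup\{q:\Ext_{R}^{q}(k,M)\neq0\}$ for finitely generated $M$ over a noetherian local ring, and likewise for the fiber), forcing $\Ext_{S}^{n}(l,N)=0$ for $n>\id_{R}(M)+\id_{S/\mathfrak{m}S}(S/\mathfrak{m}S)$, hence the asserted upper bound on $\id_{S}(N)$.

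For the reverse inequality I would use the corner term of the spectral sequence. Set $r:=\id_{R}(M)$ and $s:=\id_{S/\mathfrak{m}S}(S/\mathfrak{m}S)$. The top-degree term $\operatorname{E}_{2}^{s,r}=\Ext_{S/\mathfrak{m}S}^{s}\!\big(l,\Ext_{R}^{r}(k,M)\otimes_{k}(S/\mathfrak{m}S)\big)$ lives in the corner of the first quadrant, so no differentials enter or leave it and it survives to $\operatorname{E}_{\infty}$; since $\Ext_{R}^{r}(k,M)\neq0$ and $\Ext_{S/\mathfrak{m}S}^{s}(l,S/\mathfrak{m}S)\neq0$, this term is non-zero, and it is a subquotient of $\Ext_{S}^{r+s}(l,N)$, which is therefore non-zero. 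Hence $\id_{S}(N)\geq r+s$, completing the proof. Alternatively — and this is the more elementary route I would actually present — one can avoid spectral sequences entirely: the formula $\id_{R}(M)=\depth(R)-\operatorname{width}$, or more directly the Bass-number formula, reduces everything to the well-known flat base-change behavior of Bass numbers, namely $\mu^{i}_{S}(\mathfrak{n},N)=\sum_{j}\mu^{j}_{R}(\mathfrak{m},M)\cdot\mu^{i-j}_{S/\mathfrak{m}S}(\mathfrak{n}/\mathfrak{m}S,S/\mathfrak{m}S)$, so that the top non-vanishing index of the left side is the sum of those of the two factors on the right.

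The main obstacle I anticipate is purely bookkeeping: establishing the base-change spectral sequence (or equivalently the Bass-number product formula) with the correct finite-generation and flatness hypotheses, and verifying that the relevant $\Ext$ modules against residue fields are non-zero exactly up to the claimed cohomological degree — in particular that $\id_{S/\mathfrak{m}S}(S/\mathfrak{m}S)<\infty$ and $\id_{R}(M)<\infty$ is \emph{not} needed, since the identity holds in $\mathbb{N}\cup\{\infty\}$ with the convention $a+\infty=\infty$. One must check the $\infty$ cases separately: if either $\id_{R}(M)=\infty$ or $\id_{S/\mathfrak{m}S}(S/\mathfrak{m}S)=\infty$, the corner/product argument still produces non-vanishing $\Ext_{S}^{n}(l,N)$ for arbitrarily large $n$, giving $\id_{S}(N)=\infty$ as required. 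This is likely already available in the literature; I would cite the flat base-change formula for Bass numbers (e.g. in Bruns–Herzog or Avramov–Foxby's work on local homomorphisms) rather than reprove it.
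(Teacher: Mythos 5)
Your argument is correct. Note, though, that the paper does not prove this lemma at all: it simply cites Foxby--Thorup [FT, Corollary 1], so what you have written is essentially a reproof of the cited result rather than an alternative to an in-paper argument. Your route --- the collapse of $\mathbf{R}\Hom_{S}(l,M\otimes_{R}S)\simeq \mathbf{R}\Hom_{S/\mathfrak{m}S}\bigl(l,\mathbf{R}\Hom_{R}(k,M)\otimes_{R}S\bigr)$ into the Bass-number product formula $\mu^{i}_{S}(\mathfrak{n},M\otimes_{R}S)=\sum_{j}\mu^{j}_{R}(\mathfrak{m},M)\,\mu^{i-j}_{S/\mathfrak{m}S}(\mathfrak{n}/\mathfrak{m}S,S/\mathfrak{m}S)$, followed by reading off the top nonvanishing degree --- is the standard proof, and all your uses of finite generation (Bass's characterization $\id=\sup\{i:\Ext^{i}(\text{residue field},-)\neq 0\}$ for $M$ over $R$ and for $M\otimes_{R}S$ over $S$) are exactly where that hypothesis is needed; your handling of the corner term and of the infinite cases is also fine. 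The one step you should not wave at is the identification of the inner term as $\Ext^{q}_{R}(k,M)\otimes_{k}(S/\mathfrak{m}S)$: a complex of $R$-modules with $k$-vector-space cohomology is not automatically formal, so to split $\mathbf{R}\Hom_{R}(k,M)$ as $\bigoplus_{q}\Ext^{q}_{R}(k,M)[-q]$ you should represent it by $\Hom_{R}(k,I)$ for $I$ a \emph{minimal} injective resolution of $M$, where the differentials vanish; flatness of $S$ then lets you tensor this representative. With that justification supplied (or with the product formula simply quoted from [FT] or Avramov--Foxby, as you suggest), the proof is complete.
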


\begin{proof}
See \cite[Corollary 1]{FT}.
\end{proof}

\begin{corollary} \label{3.5}
Let $(R,\mathfrak{m},k)$ be a noetherian local ring, and $M$ a finitely generated $R$-module. Then $\id_{\widehat{R}}\left(M\otimes_{R}\widehat{R}\right) = \id_{R}(M)$. In particular, $M$ is an injective $R$-module if and only if $M\otimes_{R}\widehat{R}$ is an injective $\widehat{R}$-module.
\end{corollary}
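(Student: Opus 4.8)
The statement to prove is Corollary \ref{3.5}: for a noetherian local ring $(R,\mathfrak{m},k)$ and a finitely generated $R$-module $M$, we have $\id_{\widehat{R}}(M\otimes_R \widehat{R}) = \id_R(M)$, and in particular $M$ is injective over $R$ iff $M\otimes_R \widehat{R}$ is injective over $\widehat{R}$.

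\medskip

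The plan is to apply Lemma \ref{3.4} to the completion map $f \colon R \to \widehat{R}$. First I would observe that the completion map of a noetherian local ring is a flat local homomorphism of noetherian local rings, and that its closed fiber is $\widehat{R}/\mathfrak{m}\widehat{R} \cong R/\mathfrak{m} = k$, which is a field, hence certainly regular with $\id_k(k) = 0$. Then Lemma \ref{3.4} applied with $S = \widehat{R}$ gives immediately
$$\id_{\widehat{R}}\left(M\otimes_R \widehat{R}\right) = \id_R(M) + \id_{k}(k) = \id_R(M),$$
which is the first assertion. I should note that Lemma \ref{3.4} requires $M$ to be non-zero finitely generated; if $M = 0$ the claimed equality reads $\infty = \infty$ (or one adopts the convention $\id(0) = -\infty$) and is trivially true, so one may assume $M \neq 0$ without loss of generality.

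\medskip

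For the ``in particular'' clause, I would argue as follows. If $M$ is an injective $R$-module then $\id_R(M) = 0$, so the displayed equality forces $\id_{\widehat{R}}(M\otimes_R \widehat{R}) = 0$, i.e. $M\otimes_R \widehat{R}$ is an injective $\widehat{R}$-module. Conversely, if $M\otimes_R \widehat{R}$ is injective over $\widehat{R}$ then $\id_{\widehat{R}}(M\otimes_R \widehat{R}) = 0$, and since this equals $\id_R(M)$ by the first part, we get $\id_R(M) = 0$, i.e. $M$ is injective over $R$. (In the degenerate case $M = 0$ both conditions hold vacuously.)

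\medskip

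I do not anticipate any genuine obstacle here: the corollary is a direct specialization of Lemma \ref{3.4} to the completion map, the only points requiring a word of care being the verification that the completion map satisfies the hypotheses of Lemma \ref{3.4} (flatness and noetherianity of $\widehat{R}$ are standard for noetherian local rings, and the closed fiber is the field $k$) and the handling of the trivial module. The ``in particular'' statement is then just the case $\id_R(M) = 0$ of the dimension equality.
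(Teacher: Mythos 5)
Your proof is correct and matches the paper's argument exactly: both apply Lemma \ref{3.4} to the flat local completion map $R\to\widehat{R}$ and use that the closed fiber $\widehat{R}/\mathfrak{m}\widehat{R}\cong k$ is a field with $\id_{k}(k)=0$. Your extra remarks about the case $M=0$ and the spelling-out of the ``in particular'' clause are harmless additions to the same route.
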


\begin{proof}
Since the completion map $R\rightarrow \widehat{R}$ is flat and local, Lemma \ref{3.4} yields
$$\id_{\widehat{R}}\left(M\otimes_{R}\widehat{R}\right) = \id_{R}(M) + \id_{\widehat{R}/ \mathfrak{m}\widehat{R}}\left(\widehat{R}/ \mathfrak{m}\widehat{R}\right).$$
However, $\widehat{R}/ \mathfrak{m}\widehat{R} \cong R/ \mathfrak{m} \cong k$, so $\id_{\widehat{R}/ \mathfrak{m}\widehat{R}}\left(\widehat{R}/ \mathfrak{m}\widehat{R}\right)= \id_{k}(k)=0$ as any vector space is injective. Thus the result follows.
\end{proof}

\begin{lemma} \label{3.6}
Let $R$ be a noetherian local ring, and $M$ a finitely generated $R$-module with $\id_{R}(M)< \infty$. Then $\ann_{R}(M)$ is either zero or contains a non-zerodivisor of $R$.
\end{lemma}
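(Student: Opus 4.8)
The plan is to argue by contraposition: assuming $\ann_R(M)\neq 0$, it suffices to show that $\ann_R(M)$ is not contained in any $\mathfrak{p}\in\Ass(R)$, since the set of zerodivisors of $R$ equals $\bigcup_{\mathfrak{p}\in\Ass(R)}\mathfrak{p}$, the set $\Ass(R)$ is finite, and prime avoidance then produces a nonzerodivisor inside $\ann_R(M)$. So suppose toward a contradiction that $\ann_R(M)\subseteq\mathfrak{p}$ for some $\mathfrak{p}\in\Ass(R)$. Then $\mathfrak{p}\in\Supp(M)$, so $M_{\mathfrak{p}}\neq0$ is a finitely generated $R_{\mathfrak{p}}$-module with $\id_{R_{\mathfrak{p}}}(M_{\mathfrak{p}})\leq\id_R(M)<\infty$, while $\depth(R_{\mathfrak{p}})=0$ because $\mathfrak{p}\in\Ass(R)$.

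The heart of the argument is the following local computation. By the Bass formula (see \cite{BH}), $\id_{R_{\mathfrak{p}}}(M_{\mathfrak{p}})=\depth(R_{\mathfrak{p}})=0$, so $M_{\mathfrak{p}}$ is injective over $R_{\mathfrak{p}}$; by Lemma \ref{3.1} (equivalently \cite[Exercise 3.1.23]{BH}), $R_{\mathfrak{p}}$ is then artinian and $M_{\mathfrak{p}}\cong E_{R_{\mathfrak{p}}}\!\left(k(\mathfrak{p})\right)^{t}$ for some $t\geq1$; and since an artinian ring is complete, Matlis duality gives $\ann_{R_{\mathfrak{p}}}\!\left(E_{R_{\mathfrak{p}}}(k(\mathfrak{p}))\right)=\ann_{R_{\mathfrak{p}}}(R_{\mathfrak{p}})=0$, whence $(\ann_R M)_{\mathfrak{p}}=\ann_{R_{\mathfrak{p}}}(M_{\mathfrak{p}})=0$. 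If $\mathfrak{p}$ is an embedded prime of $R$, then $\dim(R_{\mathfrak{p}})=\Ht(\mathfrak{p})\geq1$ already contradicts the artinianness of $R_{\mathfrak{p}}$ and we are done; so assume $\mathfrak{p}$ is a minimal prime. Taking a primary decomposition $(0)=\bigcap_i Q_i$ of $R$, minimality of $\mathfrak{p}$ shows that the kernel of $R\to R_{\mathfrak{p}}$ meets $\mathfrak{p}$ only inside the nilradical $\sqrt{0}$; combined with $\ann_R(M)\subseteq\mathfrak{p}$ and $(\ann_R M)_{\mathfrak{p}}=0$, this forces $\ann_R(M)\subseteq\sqrt{0}$, hence $\ann_R(M)\subseteq\mathfrak{q}$ and $M_{\mathfrak{q}}\neq0$ for every $\mathfrak{q}\in\Ass(R)$.

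To conclude, one runs the same local computation at every $\mathfrak{q}\in\Ass(R)$: each has $\depth(R_{\mathfrak{q}})=0$ and $M_{\mathfrak{q}}\neq0$, so $R_{\mathfrak{q}}$ is artinian, $\mathfrak{q}$ is a minimal prime, and $(\ann_R M)_{\mathfrak{q}}=0$. Thus $R$ is unmixed and its total ring of fractions is $\prod_{\mathfrak{q}\in\Min(R)}R_{\mathfrak{q}}$; since $R$ embeds into its total ring of fractions and $\ann_R(M)$ vanishes in every factor, we get $\ann_R(M)=0$, contradicting our assumption. I expect the main obstacle to be the bootstrapping in the previous paragraph — passing from control of $\ann_R(M)$ after localizing at the single prime $\mathfrak{p}$ to control at all associated primes by way of the inclusion $\ann_R(M)\subseteq\sqrt{0}$ — while the one genuinely nonformal input is the Bass formula (itself a deep result, closely tied to the fact that such an $R$ must be Cohen–Macaulay).
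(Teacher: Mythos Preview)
The paper does not give its own proof of this lemma; it simply cites \cite[Theorem 4.1]{LV}. So there is no argument in the paper to compare against, and your attempt at a self-contained proof goes beyond what the paper offers.

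Your local computation at a single $\mathfrak{p}\in\Ass(R)$ with $\ann_R(M)\subseteq\mathfrak{p}$ is correct: Bass's formula gives $\id_{R_\mathfrak{p}}(M_\mathfrak{p})=\depth(R_\mathfrak{p})=0$, whence $M_\mathfrak{p}$ is injective, $R_\mathfrak{p}$ is artinian, and $(\ann_R M)_\mathfrak{p}=\ann_{R_\mathfrak{p}}(M_\mathfrak{p})=0$. The final paragraph is also sound once its hypotheses are in hand: if every associated prime is minimal and $(\ann_R M)_\mathfrak{q}=0$ for each $\mathfrak{q}\in\Min(R)$, then the embedding $R\hookrightarrow Q(R)\cong\prod_{\mathfrak{q}\in\Min(R)}R_\mathfrak{q}$ forces $\ann_R(M)=0$.

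The gap is exactly where you anticipated: the bootstrapping. Your assertion that ``the kernel of $R\to R_{\mathfrak{p}}$ meets $\mathfrak{p}$ only inside the nilradical $\sqrt{0}$'' is false. For a minimal prime $\mathfrak{p}$, the kernel of $R\to R_\mathfrak{p}$ is precisely the $\mathfrak{p}$-primary component $Q_\mathfrak{p}$ of $(0)$, and there is no reason this should lie in the other minimal primes. Concretely, take $R=k[x,y]_{(x,y)}/(xy)$ and $\mathfrak{p}=(x)$: then $\sqrt{0}=0$ while $\ker(R\to R_\mathfrak{p})=(x)$, so the kernel meets $\mathfrak{p}$ in all of $(x)\not\subseteq\sqrt{0}$. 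Consequently, from $(\ann_R M)_\mathfrak{p}=0$ at a \emph{single} associated prime you cannot deduce $\ann_R(M)\subseteq\sqrt{0}$, and without this you have no mechanism to propagate the condition $\ann_R(M)\subseteq\mathfrak{q}$ (equivalently $M_\mathfrak{q}\neq 0$) to the remaining $\mathfrak{q}\in\Ass(R)$. As written, the argument does not close; repairing it seems to require a different idea rather than a local patch at this step.
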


\begin{proof}
See \cite[Theorem 4.1]{LV}.
\end{proof}

\begin{lemma} \label{3.7}
Let $(R,\mathfrak{m})$ be a noetherian local ring, $a\in \mathfrak{m}\backslash \mathfrak{m}^{2}$ a non-zerodivisor of $R$, and $M$ a finitely generated $R/ \langle a \rangle$-module. Then the following assertions hold:
\begin{enumerate}
\item[(i)] $\pd_{R/ \langle a \rangle}(M)=\pd_{R}(M)-1$.
\item[(ii)] $\id_{R/ \langle a \rangle}(M)=\id_{R}(M)-1$.
\end{enumerate}
\end{lemma}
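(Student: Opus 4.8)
The plan is to treat (i) and (ii) as the two facets of a single change-of-rings principle for the surjection $\varphi\colon R\twoheadrightarrow\overline{R}:=R/\langle a\rangle$, which is precisely a surjective exceptional complete intersection map in the sense of Remark \ref{2.3}: its kernel $\langle a\rangle$ is generated by the single regular element $a$, and the hypothesis $a\notin\mathfrak{m}^{2}$ says exactly that the image of $a$ in $\mathfrak{m}/\mathfrak{m}^{2}$ is nonzero, hence a linearly independent set over $k$. Both parts will be proved by comparing a minimal free, respectively injective, resolution over $R$ with one over $\overline{R}$; the hypothesis $a\notin\mathfrak{m}^{2}$ is used to make this comparison \emph{minimal}, which is what upgrades the evident inequalities to equalities and transfers finiteness in both directions.

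For (i): first observe $\pd_{R}(\overline{R})=1$, read off $0\to R\xrightarrow{\,a\,}R\to\overline{R}\to 0$. Let $G_{\bullet}\to M$ be the minimal $\overline{R}$-free resolution and lift it to a complex $\widetilde{G}_{\bullet}$ of free $R$-modules with $R$-linear differential $\widetilde\partial$ reducing to that of $G_{\bullet}$. Since $\widetilde\partial^{2}\equiv 0\pmod{a}$ and $a$ is a non-zerodivisor, there is a unique $R$-linear $\widetilde{s}$ of degree $-2$ with $\widetilde\partial^{2}=a\widetilde{s}$. The differentials of $G_{\bullet}$ have entries in $\overline{\mathfrak{m}}$, so those of $\widetilde\partial$ lie in $\mathfrak{m}$ and hence $\widetilde\partial^{2}$ has entries in $\mathfrak{m}^{2}$; as $a\notin\mathfrak{m}^{2}$ forces $(\mathfrak{m}^{2}:a)=\mathfrak{m}$, the entries of $\widetilde{s}$ lie in $\mathfrak{m}$. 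Assembling $\widetilde{G}_{\bullet}$, $\widetilde{s}$, and multiplication by $a$ into a free $R$-resolution of $M$ (equivalently, via the Cartan--Eilenberg change-of-rings spectral sequence $\Tor_{p}^{\overline{R}}(M,\Tor_{q}^{R}(\overline{R},k))\Rightarrow\Tor_{p+q}^{R}(M,k)$, which has only two rows since $\pd_{R}(\overline{R})=1$) produces a long exact sequence
$$\cdots\to\Tor_{n}^{R}(M,k)\to\Tor_{n}^{\overline{R}}(M,k)\xrightarrow{\ \widetilde{s}\ }\Tor_{n-2}^{\overline{R}}(M,k)\to\Tor_{n-1}^{R}(M,k)\to\Tor_{n-1}^{\overline{R}}(M,k)\to\cdots$$
whose connecting map is the one induced by $\widetilde{s}$ on $\Tor_{\bullet}^{\overline{R}}(M,k)$; since $\widetilde{s}$ has entries in $\mathfrak{m}$, that induced map is zero. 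Hence $\beta_{n}^{R}(M)=\beta_{n}^{\overline{R}}(M)+\beta_{n-1}^{\overline{R}}(M)$ for every $n\ge 0$, which gives both $\pd_{R}(M)=\pd_{\overline{R}}(M)+1$ and the equivalence $\pd_{R}(M)<\infty\Longleftrightarrow\pd_{\overline{R}}(M)<\infty$. (When these are finite, (i) can also be obtained from the Auslander--Buchsbaum formula together with $\depth(\overline{R})=\depth(R)-1$ and $\depth_{\overline{R}}(M)=\depth_{R}(M)$.)

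For (ii): run the dual argument, now using $\fd_{R}(\overline{R})=1$, and comparing a minimal $\overline{R}$-injective resolution of $M$ with an injective resolution over $R$ (equivalently, via the corresponding change-of-rings spectral sequence comparing $\Ext_{\overline{R}}^{\bullet}(k,M)$ with $\Ext_{R}^{\bullet}(k,M)$). One obtains a long exact sequence relating $\Ext_{R}^{\bullet}(k,M)$ and $\Ext_{\overline{R}}^{\bullet}(k,M)$ whose connecting map is, dually, the cohomology operator attached to $a$, again zero because $a\notin\mathfrak{m}^{2}$; the resulting short exact sequences $0\to\Ext_{\overline{R}}^{n-1}(k,M)\to\Ext_{R}^{n}(k,M)\to\Ext_{\overline{R}}^{n}(k,M)\to 0$ give $\mu_{n}^{R}(M)=\mu_{n}^{\overline{R}}(M)+\mu_{n-1}^{\overline{R}}(M)$ for all $n$. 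Since for a nonzero finitely generated module over a noetherian local ring the injective dimension is the largest degree in which $\Ext^{\bullet}(k,-)$ is nonzero, this gives $\id_{R}(M)=\id_{\overline{R}}(M)+1$ and the equivalence of finiteness. I would remark that trying to deduce (ii) from (i) via Matlis duality and Lemma \ref{3.2} is circular here: it only turns (ii) for finitely generated modules into a flat change-of-rings statement over $\overline{R}$ for an artinian module, which is equivalent to what we are proving.

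The main obstacle is identifying the connecting homomorphism in these change-of-rings long exact sequences and showing it vanishes exactly when $a\notin\mathfrak{m}^{2}$ --- this is the sole place the hypothesis is genuinely used, and it is sharp: for $R=k[[x,y]]$ and the non-zerodivisor $a=xy\in\mathfrak{m}^{2}$, the residue field $k$ viewed over $\overline{R}=R/\langle xy\rangle$ has $\pd_{\overline{R}}(k)=\infty$ while $\pd_{R}(k)=2$, so every additivity formula and equality above fails. As these are classical change-of-rings results for a regular element that is part of a minimal generating system of the maximal ideal, in the write-up I would establish the vanishing by the explicit resolution argument sketched above (or cite it in the literature) rather than carry out full spectral-sequence bookkeeping.
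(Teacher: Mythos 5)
Your argument is correct in outline, but note that the paper does not prove this lemma at all: it simply cites \cite[Corollary 27.4]{Na} for (i) (Nagata's ``third change of rings'' theorem) and \cite[Theorem 3.1]{LV} for (ii), which is verbatim the statement of part (ii). What you have done is reconstruct a uniform proof of both classical results via the two-row change-of-rings spectral sequence and the vanishing of the degree-two (co)homology operator attached to $a$. This is a legitimate route, and it has the virtue of isolating exactly where $a\notin\mathfrak{m}^{2}$ enters --- your observation that $(\mathfrak{m}^{2}:a)=\mathfrak{m}$ forces the lifted operator $\widetilde{s}$ to have entries in $\mathfrak{m}$ --- together with a sharp counterexample ($a=xy$ in $k[[x,y]]$). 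The price is that you must either verify exactness of the assembled complex $F_{n}=\widetilde{G}_{n}\oplus\widetilde{G}_{n-1}$ with differential built from $\widetilde{\partial}$, $a$, and $\widetilde{s}$, or identify the $d_{2}$ of the spectral sequence with the map induced by $\widetilde{s}$; both are standard (Gulliksen, Eisenbud) but not free, and you rightly flag them.

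One refinement is needed in (ii). In the Tor case the vanishing is immediate: minimality gives $\Tor_{\ast}^{\overline{R}}(M,k)=G_{\ast}\otimes k$ and $\overline{s}\otimes k=0$. In the Ext case the coefficients are $M$ rather than $k$, so ``$\overline{s}$ has entries in $\overline{\mathfrak{m}}$'' does not by itself kill the induced map on $H^{\ast}\left(\Hom_{\overline{R}}(G,M)\right)$. The clean fix is to take $G$ to be the minimal free resolution of $k$ over $\overline{R}$ and observe that the chain map $\overline{s}$ represents a class in $\Ext^{2}_{\overline{R}}(k,k)$ given by the cocycle $G_{2}\xrightarrow{\overline{s}}G_{0}\rightarrow k$, which is zero because $\im(\overline{s})\subseteq\overline{\mathfrak{m}}G_{0}$; hence $\overline{s}$ is null-homotopic and acts as zero on $\Ext^{\ast}_{\overline{R}}(k,M)$ for every $M$. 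With that in place, your Bass-number identity $\mu_{n}^{R}(M)=\mu_{n}^{\overline{R}}(M)+\mu_{n-1}^{\overline{R}}(M)$ and the conclusion follow as you describe. Your remark that deducing (ii) from (i) by Matlis duality and Lemma \ref{3.2} is essentially circular is also well taken.
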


\begin{proof}
See \cite[Corollary 27.4]{Na} and \cite[Theorem 3.1]{LV}.
\end{proof}

\begin{lemma} \label{3.8}
Let $R$ be a noetherian ring, $M$ a finitely generated $R$-module, and $\underline{a}=a_{1},...,a_{n}\in R$ a regular sequence on $M$. If $L$ is a free resolution of $M$, then $K^{R}(\underline{a})\otimes_{R} L$ is a free resolution of $M/ \langle \underline{a} \rangle M$.
\end{lemma}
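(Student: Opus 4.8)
The plan is to induct on $n$, reducing to the case of a single nonzerodivisor, where $K^R(\underline{a}) \otimes_R L$ is nothing but a mapping cone.

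First I would handle the base case $n = 1$. Here $K^R(a_1) \otimes_R L$ is, up to the usual sign conventions, the mapping cone $\Cone(a_1 \colon L \to L)$ of multiplication by $a_1$ on $L$; in particular it is a complex of free $R$-modules concentrated in non-negative homological degrees, which is what is needed for it to be a candidate free resolution. The long exact homology sequence of the cone reads
$$\cdots \to H_p(L) \xrightarrow{a_1} H_p(L) \to H_p\!\left(K^R(a_1) \otimes_R L\right) \to H_{p-1}(L) \xrightarrow{a_1} H_{p-1}(L) \to \cdots.$$
Since $L$ resolves $M$, we have $H_p(L) = 0$ for $p \geq 1$ and $H_0(L) = M$; substituting this annihilates $H_p\!\left(K^R(a_1) \otimes_R L\right)$ for $p \geq 2$, turns the segment at $p = 1$ into $0 \to H_1\!\left(K^R(a_1) \otimes_R L\right) \to M \xrightarrow{a_1} M$, which vanishes because $a_1$ is a nonzerodivisor on $M$, and yields $H_0 = \Coker(a_1 \colon M \to M) = M/a_1 M$. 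Thus $K^R(a_1) \otimes_R L$ is a free resolution of $M/a_1 M$.

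For the inductive step, set $\underline{a}' = a_1, \dots, a_{n-1}$ and use the multiplicativity of the Koszul complex, $K^R(\underline{a}) \cong K^R(a_n) \otimes_R K^R(\underline{a}')$, to rewrite $K^R(\underline{a}) \otimes_R L \cong K^R(a_n) \otimes_R \left(K^R(\underline{a}') \otimes_R L\right)$. The inductive hypothesis, applied to the regular sequence $\underline{a}'$ on $M$, gives that $L' := K^R(\underline{a}') \otimes_R L$ is a free resolution of $M' := M/\langle \underline{a}' \rangle M$, and the hypothesis that $\underline{a}$ is a regular sequence on $M$ gives that $a_n$ is a nonzerodivisor on $M'$. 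Applying the base case with $(a_1, M, L)$ replaced by $(a_n, M', L')$ then shows that $K^R(a_n) \otimes_R L'$ is a free resolution of $M'/a_n M' = M/\langle \underline{a} \rangle M$, which closes the induction.

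The only delicate points are bookkeeping: identifying $K^R(a_1) \otimes_R L$ with the mapping cone while keeping the signs and indices straight, confirming that the tensor product remains in non-negative homological degrees, and recalling that tensor products and direct sums of free modules are free. If a non-inductive argument is preferred, the quickest route is to observe that $L \to M$ is a quasi-isomorphism and $K^R(\underline{a})$ is a bounded complex of flat modules, so $K^R(\underline{a}) \otimes_R L \to K^R(\underline{a}) \otimes_R M$ is a quasi-isomorphism onto the Koszul complex on $\underline{a}$ with coefficients in $M$, and then to invoke the standard acyclicity of the Koszul complex on a regular sequence to read off $H_i = 0$ for $i > 0$ and $H_0 = M/\langle \underline{a} \rangle M$.
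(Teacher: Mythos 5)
Your proof is correct, but your primary argument takes a genuinely different route from the paper's. The paper argues in one stroke: since $K^{R}(\underline{a})$ is a bounded complex of free modules, tensoring it with the quasi-isomorphism $L\to M$ yields a quasi-isomorphism $K^{R}(\underline{a})\otimes_{R}L\to K^{R}(\underline{a})\otimes_{R}M$, and then it quotes the standard acyclicity of the Koszul complex on a regular sequence \cite[Corollary 1.6.14 (a)]{BH} to identify the homology of the target — this is exactly the ``non-inductive argument'' you sketch in your closing paragraph. Your main argument instead inducts on $n$ via the factorization $K^{R}(\underline{a})\cong K^{R}(a_{n})\otimes_{R}K^{R}(\underline{a}')$, identifying $K^{R}(a)\otimes_{R}L$ with $\Cone(a\colon L\to L)$ and reading off the homology from the long exact sequence of the cone; the base case computation ($H_{p}=0$ for $p\geq 2$ trivially, $H_{1}=\Ker(a_{1}\colon M\to M)=0$ by regularity, $H_{0}=M/a_{1}M$) and the inductive step (applying the base case to $a_{n}$, $M'=M/\langle\underline{a}'\rangle M$, and the free resolution $L'=K^{R}(\underline{a}')\otimes_{R}L$ supplied by induction) are both sound. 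What each approach buys: yours is self-contained — it needs neither the cited Koszul acyclicity theorem nor the fact that tensoring with a bounded complex of flats preserves quasi-isomorphisms, since it effectively reproves Koszul acyclicity along the way — at the cost of the sign and indexing bookkeeping you acknowledge; the paper's is shorter and cleaner because it delegates the homological content to two standard facts.
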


\begin{proof}
Since $L$ is a free resolution of $M$, there is a quasi-isomorphism $L \rightarrow M$. As the Koszul complex $K^{R}(\underline{a})$ is a bounded complex of free $R$-modules, we get a quasi-isomorphism $K^{R}(\underline{a}) \otimes_{R}L \rightarrow K^{R}(\underline{a}) \otimes_{R}M$. Since $\underline{a}=a_{1},...,a_{n}$ is a regular sequence on $M$, \cite[Corollary 1.6.14 (a)]{BH} implies that
$$H_{i}\left(K^{R}(\underline{a}) \otimes_{R}M\right) = H_{i}(\underline{a};M) \cong
\begin{cases}
M/ \langle \underline{a} \rangle M & \text{if } i=0 \\
0 & \text{if } i > 0,
\end{cases}$$
so there is a quasi-isomorphism $K^{R}(\underline{a}) \otimes_{R}M \rightarrow M/ \langle \underline{a} \rangle M$. As a result, we get a quasi-isomorphism $K^{R}(\underline{a}) \otimes_{R}L \rightarrow M/ \langle \underline{a} \rangle M$, so $K^{R}(\underline{a})\otimes_{R} L$ is a free resolution of $M/ \langle \underline{a} \rangle M$.
\end{proof}

\begin{remark} \label{3.9}
Let $R$ and $S$ be two rings, $M$ a symmetric $R$-$R$-bimodule, i.e. $ax=xa$ for every $a\in R$ and $x\in M$, and $N$ and $K$ two $R$-$S$-bimodules. Considering $M$ as a left $R$-module, $N$ as a right $S$-module, and $K$ as an $R$-$S$-bimodule, we have the Hom Swap Isomorphism
$$\Hom_{R}\left(M,\Hom_{S}(N,K)\right)\cong \Hom_{S}\left(N,\Hom_{R}(M,K)\right);$$
see \cite[A.2.9]{Ch}. On the other hand, considering $M$ as a right $R$-module, $N$ as an $R$-$S$-bimodule, and $K$ as a right $S$-module, we have the Hom-Tensor Adjoint Isomorphism
$$\Hom_{R}\left(M,\Hom_{S}(N,K)\right)\cong \Hom_{S}(M\otimes_{R}N,K);$$
see \cite[A.2.8]{Ch}. One might be tempted to merge the two isomorphisms as their left hand sides seem to coincide. However, in the first isomorphism, $M$ is a left $R$-module and $\Hom_{S}(N,K)$ is a left $R$-module via the left $R$-module structure of $K$. Similarly, in the second isomorphism, $M$ is a right $R$-module and $\Hom_{S}(N,K)$ is a right $R$-module via the left $R$-module structure of $N$. As a result, if this $R$-$R$-bimodule structure of $\Hom_{S}(N,K)$ is symmetric, then we conclude that the left hand sides of the above isomorphisms coincide, so we get:
$$\Hom_{S}(M\otimes_{R}N,K)\cong \Hom_{S}\left(N,\Hom_{R}(M,K)\right)$$
But if the $R$-$R$-bimodule structure of $\Hom_{S}(N,K)$ is not symmetric, then the above isomorphism need not hold.

For example, let $K$ be a field. Considering the polynomial ring $K[X]$ as a $K[X]$-$K$-bimodule in a natural way, we notice that the $K[X]$-$K[X]$-bimodule $\Hom_{K}\left(K[X],K[X]\right)$ is not symmetric. Indeed, the left $K[X]$-module structure of $\Hom_{K}\left(K[X],K[X]\right)$ comes from the left $K[X]$-module structure of $K[X]$ in the second argument, and the right $K[X]$-module structure of $\Hom_{K}\left(K[X],K[X]\right)$ comes from the left $K[X]$-module structure of $K[X]$ in the first argument. Consider the elements $X\in K[X]$ and $f\in \Hom_{K}\left(K[X],K[X]\right)$ defined by $f(1)=1$ and $f(X^{n})=X^{n+1}$ for every $n\geq 1$. Then we have
$$(Xf)(1)= Xf(1)= X \neq X^{2} = f(X) = f(X1) = (fX)(1),$$
so $Xf \neq fX$. We further observe that:
\begin{equation*}
\begin{split}
 \Hom_{K}\left(K[X]\otimes_{K[X]}K[X]/ \langle X \rangle,K[X]\right) & \cong \Hom_{K}\left(K[X]/ \langle X \rangle,K[X]\right) \\
 & \cong \Hom_{K}\left(K,K[X]\right) \\
 & \cong K[X]
\end{split}
\end{equation*}
and
\begin{equation*}
\begin{split}
 \Hom_{K}\left(K[X],\Hom_{K[X]}\left(K[X]/ \langle X \rangle,K[X]\right)\right) & \cong \Hom_{K}\left(K[X],\left(0:_{K[X]}\langle X \rangle\right)\right) \\
 & = \Hom_{K}(K[X],0) \\
 & = 0
\end{split}
\end{equation*}
Thus the left hand sides cannot be isomorphic.
\end{remark}

\begin{lemma} \label{3.10}
Let $f:(R,\mathfrak{m},k) \rightarrow S$ be a local homomorphism of noetherian local rings, and $M$ a finitely generated $S$-module that is considered an $R$-module via $f$. Then the following assertions are equivalent for any given $n\geq 0$:
\begin{enumerate}
\item[(i)] $\id_{R}(M)\leq n$.
\item[(ii)] $\Ext_{R}^{i}(k,M)=0$ for every $i\geq n+1$.
\end{enumerate}
\end{lemma}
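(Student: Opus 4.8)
The implication (i) $\Rightarrow$ (ii) is immediate, since $\id_{R}(M)\le n$ forces $\Ext_{R}^{i}(-,M)=0$ for all $i>n$. So the content is (ii) $\Rightarrow$ (i), and the plan is to run the classical argument characterizing injective dimension over a noetherian local ring in terms of $\Ext$ against the residue field, but carrying the $S$-module structure of the relevant $\Ext$-groups throughout, so that finite generation over $S$ can substitute for the (unavailable) finite generation over $R$.

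First I would reduce the claim $\id_{R}(M)\le n$ to a statement about primes. By Baer's criterion together with dimension shifting, $\id_{R}(M)\le n$ holds if and only if $\Ext_{R}^{n+1}(R/I,M)=0$ for every ideal $I$ of $R$; and by taking a prime filtration of each $R/I$ and chasing the resulting short exact sequences, it suffices to prove that $\Ext_{R}^{j}(R/\mathfrak{p},M)=0$ for every $\mathfrak{p}\in\Spec(R)$ and every $j\ge n+1$. I would establish this by descending induction on $\dim(R/\mathfrak{p})$. When $\dim(R/\mathfrak{p})=0$ we have $\mathfrak{p}=\mathfrak{m}$, and the required vanishing is precisely hypothesis (ii). For the inductive step, choose $x\in\mathfrak{m}\setminus\mathfrak{p}$; since $R/\mathfrak{p}$ is a domain, $x$ is a nonzerodivisor on it, and every prime $\mathfrak{q}$ in the support of $R/(\mathfrak{p}+xR)$ satisfies $\dim(R/\mathfrak{q})<\dim(R/\mathfrak{p})$. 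Applying the induction hypothesis to a prime filtration of $R/(\mathfrak{p}+xR)$ gives $\Ext_{R}^{j}(R/(\mathfrak{p}+xR),M)=0$ for all $j\ge n+1$, and then the long exact sequence in $\Ext_{R}(-,M)$ attached to $0\to R/\mathfrak{p}\xrightarrow{\,x\,}R/\mathfrak{p}\to R/(\mathfrak{p}+xR)\to 0$ shows that multiplication by $x$ is an isomorphism on $\Ext_{R}^{j}(R/\mathfrak{p},M)$ for every $j\ge n+1$.

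The decisive step, which I also expect to be the main obstacle, is to conclude from this that $\Ext_{R}^{j}(R/\mathfrak{p},M)=0$. Here I would use that $\Ext_{R}^{j}(R/\mathfrak{p},M)$ carries a natural $S$-module structure inherited from that of $M$, and that it is finitely generated over $S$: computing it from a resolution of $R/\mathfrak{p}$ by finitely generated free $R$-modules presents it as a subquotient of a finite direct sum of copies of $M$ equipped with $S$-linear differentials, and $S$ is noetherian. Since $f$ is local, the element $x$ acts on this $S$-module through $f(x)\in\mathfrak{n}$; an element of $\mathfrak{n}$ acting invertibly on a finitely generated $S$-module makes that module equal to its own $\mathfrak{n}$-multiple, so Nakayama's lemma forces it to vanish. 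This completes the induction, and with it the proof.

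The reason this last step is the crux is that $M$ need not be finitely generated over $R$, so the naive Nakayama argument over $R$ is unavailable; the whole point of the hypothesis is that the $S$-module structure supplies the missing finiteness, while locality of $f$ guarantees that $x$ operates through the maximal ideal of $S$. Everything else — the Baer/dimension-shifting reduction, the prime filtrations, and the long exact sequences — is routine bookkeeping.
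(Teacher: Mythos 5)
Your proof is correct. The paper itself gives no argument here; it simply cites \cite[Proposition 5.5 (I)]{AF}, and what you have written is essentially the standard proof underlying that reference: Baer's criterion plus dimension shifting, prime filtrations, induction on $\dim(R/\mathfrak{p})$ via the sequence $0\to R/\mathfrak{p}\xrightarrow{x}R/\mathfrak{p}\to R/(\mathfrak{p}+xR)\to 0$, and then Nakayama over $S$ rather than over $R$. You have also correctly identified the two points where care is needed and handled both: the $\Ext$-modules are finitely generated over $S$ because they are subquotients of finite direct sums of copies of $M$ with $S$-linear differentials, and the element $x$ acts on them through $f(x)\in\mathfrak{n}$ because the two natural $R$-actions on $\Ext_{R}^{j}(R/\mathfrak{p},M)$ coincide and the action through $M$ factors through $f$. (One cosmetic remark: what you call descending induction on $\dim(R/\mathfrak{p})$ is ascending induction on that integer, i.e.\ descent in the poset of primes; the logic is unaffected.)
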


\begin{proof}
See \cite[Proposition 5.5 (I)]{AF}.
\end{proof}

\begin{lemma} \label{3.11}
Let $f:(R,\mathfrak{m},k) \rightarrow (S,\mathfrak{n},l)$ be a flat local homomorphism of noetherian local rings with $S/ \mathfrak{m}S$ regular, $g:S\rightarrow T$ a local homomorphism of noetherian local rings, and $M$ a finitely generated $T$-module. Then we have:
$$\id_{S}(M)\leq \id_{R}(M)+ \edim(S/ \mathfrak{m}S)$$
\end{lemma}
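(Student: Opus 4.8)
The plan is to translate the bound into a vanishing statement for $\Ext_{S}^{*}(l,M)$ in large cohomological degrees, and to prove that vanishing by assembling a conveniently short free resolution of $l$ over $S$ out of a free resolution of $k$ over $R$ and the Koszul complex on a regular system of parameters of the closed fiber. I may assume $\id_{R}(M)<\infty$, since otherwise the right-hand side is infinite; set $r=\id_{R}(M)$ and $d=\edim(S/\mathfrak{m}S)=\dim(S/\mathfrak{m}S)$, the last equality holding because $S/\mathfrak{m}S$ is regular, and note that $\Ext_{R}^{p}(k,M)=0$ for every $p>r$. Since $g\colon S\to T$ is local and $M$ is finitely generated over $T$, Lemma \ref{3.10} (applied to $g$ and $M$, with residue field $l$) reduces the desired inequality $\id_{S}(M)\le r+d$ to the assertion that $\Ext_{S}^{i}(l,M)=0$ for all $i\ge r+d+1$.

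To build the resolution, let $F_{\bullet}\to k$ be a free resolution over $R$; flatness of $f$ makes $S\otimes_{R}F_{\bullet}$ a free resolution of $S\otimes_{R}k=S/\mathfrak{m}S$ over $S$. Choose $x_{1},\dots,x_{d}\in\mathfrak{n}$ whose images form a regular system of parameters of the regular local ring $S/\mathfrak{m}S$; then $\underline{x}=x_{1},\dots,x_{d}$ is a regular sequence on the finitely generated $S$-module $S/\mathfrak{m}S$, and $\mathfrak{n}=\mathfrak{m}S+\langle\underline{x}\rangle$. Applying Lemma \ref{3.8} over the ring $S$ to the module $S/\mathfrak{m}S$, the regular sequence $\underline{x}$, and the resolution $S\otimes_{R}F_{\bullet}$, I get that
$$G_{\bullet}:=K^{S}(\underline{x})\otimes_{S}\bigl(S\otimes_{R}F_{\bullet}\bigr)$$
is a free resolution of $(S/\mathfrak{m}S)\big/\langle\underline{x}\rangle(S/\mathfrak{m}S)=S/\mathfrak{n}=l$ over $S$. (Note that I do not need $\underline{x}$ to be $S$-regular, only regular on $S/\mathfrak{m}S$.)

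It then remains to compute $\Ext_{S}^{*}(l,M)=H^{*}\!\left(\Hom_{S}(G_{\bullet},M)\right)$. Using Hom-tensor adjunction over $S$ together with the fact that each free module appearing in $K^{S}(\underline{x})$ has finite rank, and then the adjunction $\Hom_{S}(S\otimes_{R}-,-)\cong\Hom_{R}(-,-)$, one obtains an isomorphism of complexes
$$\Hom_{S}(G_{\bullet},M)\cong\Hom_{R}\bigl(F_{\bullet},C^{\bullet}\bigr),\qquad C^{\bullet}:=\Hom_{S}\bigl(K^{S}(\underline{x}),M\bigr),$$
where $C^{\bullet}$ is a complex of $R$-modules (via $f$) concentrated in cohomological degrees $0,\dots,d$ with $C^{q}\cong M^{\binom{d}{q}}$. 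Filtering the resulting double complex $\Hom_{R}(F_{p},C^{q})$ by $q$ produces a spectral sequence with $E_{1}^{q,p}\cong\Ext_{R}^{p}(k,M)^{\binom{d}{q}}$ converging to $\Ext_{S}^{p+q}(l,M)$, and since the filtration is finite, convergence is automatic. Because $E_{1}^{q,p}$ vanishes unless $0\le q\le d$ and $0\le p\le r$, it follows that $\Ext_{S}^{n}(l,M)=0$ for all $n>r+d$, and hence $\id_{S}(M)\le r+d=\id_{R}(M)+\edim(S/\mathfrak{m}S)$ by Lemma \ref{3.10}. (Alternatively, one can avoid the spectral sequence and argue by induction on $d$ using the brutal truncations of $C^{\bullet}$, since each $C^{q}$ has injective dimension $\le r$ over $R$.)

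I expect the main obstacle to be two-fold. First, recognizing that $l$ admits a free $S$-resolution short enough for the argument to close: this is exactly the point where regularity of the closed fiber is used, via the length-$d$ Koszul resolution of $l$ over $S/\mathfrak{m}S$, together with Lemma \ref{3.8} to splice it with the flat base change $S\otimes_{R}F_{\bullet}$. Second, organizing the Hom-manipulations and the degree bookkeeping cleanly, keeping in mind that $M$ need not be finitely generated over $R$ or over $S$ — which is precisely why Lemma \ref{3.10}, phrased for modules finitely generated over the target of an auxiliary local map, is the correct tool here rather than any statement about finitely generated $R$-modules.
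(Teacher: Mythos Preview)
Your proof is correct and follows essentially the same strategy as the paper: build a free $S$-resolution of $l$ by tensoring the Koszul complex on a regular system of parameters of the fiber with the base-changed resolution of $k$, run the spectral sequence of the resulting double complex after applying $\Hom$ into $M$, and conclude via Lemma \ref{3.10}. Your adjunction route $\Hom_{S}(G_{\bullet},M)\cong\Hom_{S}(S\otimes_{R}F_{\bullet},\Hom_{S}(K,M))\cong\Hom_{R}(F_{\bullet},C^{\bullet})$ is a slight streamlining of the paper's, which instead sets up the bicomplex as $\Hom_{S}\bigl(K,\Hom_{R}(L,M)\bigr)$ and then invokes the bimodule symmetry discussion of Remark \ref{3.9} to identify its total complex with $\Hom_{S}(L\otimes_{R}K,M)$.
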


\begin{proof}
Since $f$ is local, we have $\mathfrak{m}S\subseteq \mathfrak{n}$. Considering the noetherian local ring $(S/ \mathfrak{m}S,\mathfrak{n}/ \mathfrak{m}S)$, we set $\mu(\mathfrak{n}/ \mathfrak{m}S) = \edim(S/ \mathfrak{m}S) = n$. Let $\underline{a}=a_{1},...,a_{n}\in \mathfrak{n}$ be such that $\{a_{1}+\mathfrak{m}S,...,a_{n}+\mathfrak{m}S\}$ is a minimal generating set for $\mathfrak{n}/ \mathfrak{m}S$. In particular, $\mathfrak{n} = \langle \underline{a} \rangle + \mathfrak{m}S$. As $S/ \mathfrak{m}S$ is regular, we conclude that $a_{1}+\mathfrak{m}S,...,a_{n}+\mathfrak{m}S$ is a regular sequence on the ring $S/ \mathfrak{m}S$, so $\underline{a}=a_{1},...,a_{n}$ is a regular sequence on the $S$-module $S/\mathfrak{m}S$. Set the Koszul complex $K=K^{S}(\underline{a})$. Moreover, let $L$ be a free resolution for the $R$-module $k$. Since $f$ is flat, we infer that $S\otimes_{R}L$ is a free resolution for the $S$-module $S\otimes_{R}k \cong S/ \mathfrak{m}S$. It follows from Lemma \ref{3.8} that $K\otimes_{R}L \cong K\otimes_{S}(S\otimes_{R}L)$ is a free resolution for the $S$-module
$$(S/ \mathfrak{m}S) / \langle \underline{a} \rangle (S/ \mathfrak{m}S) \cong S/ (\langle \underline{a} \rangle + \mathfrak{m}S) = S/ \mathfrak{n} \cong l.$$
Consider the third quadrant $S$-bicomplex $W$ with $W_{p,q} = \Hom_{S}\left(K_{-p},\Hom_{R}(L_{-q},M)\right)$ for every $p,q\in \mathbb{Z}$ and the differentials induced by those of $K$ and $L$. It can be seen by inspection that $\Tot(W)=\Hom_{S}\left(K,\Hom_{R}(L,M)\right)$. We compute the first spectral sequence arising from $W$. For any $p\leq 0$, $K_{-p}$ is a free $S$-module, so the functor $\Hom_{S}\left(K_{-p},-\right)$ is exact. Thus we get
\begin{equation*}
\begin{split}
 E^{1}_{p,q} & = H_{p,q}^{v}(W) \\
 & = H_{q}\left(\Hom_{S}\left(K_{-p},\Hom_{R}(L,M)\right)\right) \\
 & \cong \Hom_{S}\left(K_{-p},H_{q}\left(\Hom_{R}(L,M)\right)\right) \\
 & = \Hom_{S}\left(K_{-p},\Ext_{R}^{-q}(k,M)\right)
\end{split}
\end{equation*}
for every $p,q \leq 0$. Next, we obtain
\begin{equation*}
\begin{split}
 E^{2}_{p,q} & = H_{p,q}^{h}\left(H^{v}(W)\right) \\
 & = H_{p}\left(\Hom_{S}\left(K,\Ext_{R}^{-q}(k,M)\right)\right) \\
 & \cong H_{p+n}\left(\underline{a};\Ext_{R}^{-q}(k,M)\right)
\end{split}
\end{equation*}
for every $p,q\leq 0$. We note that $L$ is a complex of symmetric $R$-$R$-bimodules. Moreover, by restriction of scalars via $f$, we notice that $K$ is a complex of $R$-$S$-bimodules and $M$ is an $R$-$S$-bimodule. Accordingly, for any $i\geq 0$, we deduce that $\Hom_{S}(K_{i},M)$ is a left $R$-module via the left $R$-module structure of $M$, and it is a right $R$-module via the left $R$-module structure of $K_i$. If $\varphi_{i}\in \Hom_{S}(K_{i},M)$, $x\in K_{i}$, and $r\in R$, then we have:
$$(r\varphi_{i})(x)=r\varphi_{i}(x)=\varphi_{i}(x)f(r)=\varphi_{i}\left(xf(r)\right)=\varphi_{i}(rx)=(\varphi_{i}r)(x)$$
Hence $r \varphi_{i}= \varphi_{i} r$. This shows that $\Hom_{S}(K_{i},M)$ is a symmetric $R$-$R$-bimodule for every $i\geq 0$. Therefore, by Remark \ref{3.9}, the following isomorphism of $S$-complexes holds:
$$\Tot(W)=\Hom_{S}\left(K,\Hom_{R}(L,M)\right)\cong \Hom_{S}(L\otimes_{R}K,M)$$
Bearing in mind that $L\otimes_{R}K \cong K\otimes_{R}L$ is a free resolution for the $S$-module $l$, we get
$$H_{i}\left(\Tot(W)\right) \cong H_{i}\left(\Hom_{S}(L\otimes_{R}K,M)\right) = \Ext_{S}^{-i}(l,M)$$
for every $i\in \mathbb{Z}$. Therefore, we wind up with the following third quadrant spectral sequence:
$$E^{2}_{p,q}=H_{p+n}\left(\underline{a};\Ext_{R}^{-q}(k,M)\right) \Rightarrow \Ext_{S}^{-p-q}(l,M)$$
Now if $p+q<-\id_{R}(M)-n$, then either $p<-n$ or $q<-\id_{R}(M)$. If $p<-n$, then $E^{2}_{p,q}=0$, and if $q<-\id_{R}(M)$, then $\Ext^{-q}_{R}(k,M)=0$, so $E^{2}_{p,q}=0$. It follows that $\Ext^{i}_{S}(l,M)=0$ for every $i>\id_{R}(M)+n$, so by Lemma \ref{3.10}, $\id_{S}(M)\leq \id_{R}(M)+n$.
\end{proof}

We are now ready to deal with the finitely generated case.

\begin{theorem} \label{3.12}
Let $f:(R,\mathfrak{m},k) \rightarrow (S,\mathfrak{n},l)$ be a local homomorphism of noetherian local rings, and $M$ a non-zero finitely generated $S$-module with $\id_{R}(M)\leq \edim(R)- \edim(S)$. Then the following assertions hold:
\begin{enumerate}
\item[(i)] $M$ is an injective $S$-module, i.e. $M\cong E_{S}(l)^{\type_{S}(M)}$.
\item[(ii)] $f$ is an exceptional complete intersection map.
\item[(iii)] $\id_{R}(M) = \edim(R)- \edim(S) = \fd_{R}(S)$.
\end{enumerate}
In particular, if $M$ is injective as an $R$-module, then $\edim(R)= \edim(S)$ and $f$ is flat.
\end{theorem}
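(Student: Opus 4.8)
The plan is to reduce in two steps to a surjective map of complete rings and then induct on $\edim(R)-\edim(S)$, killing a suitable nonzerodivisor at each stage. One general remark will be used repeatedly: once assertion (i) holds and the first equality in (iii) is known for a local map with complete target, the second equality follows, because $M\cong E_{S}(l)^{\type_{S}(M)}$ gives $M^{\vee}\cong S^{\type_{S}(M)}$ and Lemma~\ref{3.2}(i) then yields $\fd_{R}(S)=\fd_{R}(M^{\vee})=\id_{R}(M)=\edim(R)-\edim(S)$.

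\textbf{Reduction to a surjection of complete rings.} Replacing $S$ by $\widehat{S}$ and $M$ by $\widehat{M}:=M\otimes_{S}\widehat{S}$ one has $\edim(\widehat{S})=\edim(S)$, $\id_{\widehat{S}}(\widehat{M})=\id_{S}(M)$ by Corollary~\ref{3.5}, and $\id_{R}(\widehat{M})=\id_{R}(M)$ since $\Ext_{R}^{i}(R/\mathfrak{b},\widehat{M})\cong\Ext_{R}^{i}(R/\mathfrak{b},M)\otimes_{S}\widehat{S}$ for every ideal $\mathfrak{b}$ of $R$ and $\widehat{S}$ is faithfully flat over $S$; moreover $M$ is injective over $S$ iff $\widehat{M}$ is injective over $\widehat{S}$ (Corollary~\ref{3.5}), the notion of exceptional complete intersection map is insensitive to completing $S$, and $\fd_{R}(S)=\fd_{R}(\widehat{S})$ as flat dimension over a noetherian local ring is detected by the residue field. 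So we may assume $S$ complete, and fix a Cohen factorization $R\to R'\to S$ with $R'$ complete, $\dot{f}:R\to R'$ flat with regular closed fiber, and $f':R'\to S$ surjective. By Lemma~\ref{2.2}, $\edim(R'/\mathfrak{m}R')=\edim(R')-\edim(R)$, and Lemma~\ref{3.11} gives $\id_{R'}(M)\le\id_{R}(M)+\edim(R'/\mathfrak{m}R')\le\edim(R')-\edim(S)$. Granting the theorem for the surjective map $f'$: (i) transfers verbatim, (ii) transfers by the last sentence of Remark~\ref{2.3}, and reading the displayed inequality backwards gives $\id_{R}(M)\ge\edim(R)-\edim(S)$, hence equality with the hypothesis; the general remark then gives $\fd_{R}(S)=\edim(R)-\edim(S)$, and the ``in particular'' clause is immediate since $\id_{R}(M)=0$ forces $\edim(R)=\edim(S)$ and $\fd_{R}(S)=0$.

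\textbf{The surjective case.} Let $f:R\to S=R/\mathfrak{a}$ be surjective with $R$, $S$ complete and $\id_{R}(M)\le c:=\edim(R)-\edim(S)$. First I would reduce to $M$ being faithful over $S$. Applying the faithful subcase (below) to $R\to S_{0}:=R/\ann_{R}(M)$, which is legitimate as $\edim(S_{0})\le\edim(S)$, one learns that $\ann_{R}(M)$ is generated by a regular sequence whose image in $\mathfrak{m}/\mathfrak{m}^{2}$ is linearly independent and that $\id_{R}(M)=\edim(R)-\edim(S_{0})$; comparing with $\id_{R}(M)\le\edim(R)-\edim(S)\le\edim(R)-\edim(S_{0})$ forces $\edim(S)=\edim(S_{0})$, hence $\mathfrak{a}+\mathfrak{m}^{2}=\ann_{R}(M)+\mathfrak{m}^{2}$; since an ideal $\mathfrak{c}$ generated by elements linearly independent modulo $\mathfrak{m}^{2}$ satisfies $\mathfrak{c}\cap\mathfrak{m}^{2}=\mathfrak{m}\mathfrak{c}$, the modular law and Nakayama's lemma give $\mathfrak{a}=\ann_{R}(M)$, so $S=S_{0}$ and we are done. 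Now assume $M$ faithful over $S$ and argue by induction on $c$. If $c=0$, then $\id_{R}(M)=0$, so the non-zero finitely generated $R$-module $M$ is injective, $R$ is artinian, and $M\cong E_{R}(k)^{\type_{R}(M)}$ by Lemma~\ref{3.1}; an artinian local ring is complete, so Matlis duality is faithful, whence $\mathfrak{a}=\ann_{R}(M)=\ann_{R}(E_{R}(k))=0$, $R=S$, and all assertions hold. If $c\ge1$, then $\mathfrak{a}\not\subseteq\mathfrak{m}^{2}$, while $\mathfrak{a}=\ann_{R}(M)\ne0$, so by Lemma~\ref{3.6} $\mathfrak{a}$ is contained in no associated prime of $R$; by prime avoidance there is $a\in\mathfrak{a}\setminus\mathfrak{m}^{2}$ that is a nonzerodivisor on $R$. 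This extraction of $a$ is the one genuinely delicate step, and it is exactly why one first passes to the faithful case so that $\mathfrak{a}=\ann_{R}(M)$ and Lemma~\ref{3.6} becomes available for $\mathfrak{a}$ itself; everything else is bookkeeping with change-of-rings identities. By Lemma~\ref{3.7}(ii), $\id_{R/\langle a\rangle}(M)=\id_{R}(M)-1\le c-1=\edim(R/\langle a\rangle)-\edim(S)$, so the inductive hypothesis applies to the surjection $R/\langle a\rangle\to S$: $M$ is injective over $S$, $R/\langle a\rangle\to S$ is exceptional complete intersection, and $\id_{R/\langle a\rangle}(M)=\edim(R/\langle a\rangle)-\edim(S)$. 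Lifting to $x_{1},\dots,x_{s}\in\mathfrak{a}$ a regular sequence generating $\mathfrak{a}/\langle a\rangle$ and linearly independent modulo $(\mathfrak{m}/\langle a\rangle)^{2}$, the sequence $a,x_{1},\dots,x_{s}$ is a regular sequence on $R$ generating $\mathfrak{a}$, and a short computation using $a\notin\mathfrak{m}^{2}$ shows its image in $\mathfrak{m}/\mathfrak{m}^{2}$ is linearly independent; hence $f$ is exceptional complete intersection, and $\id_{R}(M)=\id_{R/\langle a\rangle}(M)+1=\edim(R)-\edim(S)$. With the general remark this completes the proof.
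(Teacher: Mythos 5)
Your proposal is correct, and its skeleton coincides with the paper's: reduce to $S$ complete, pass through a Cohen factorization to make $f$ surjective (using Lemmas \ref{2.2} and \ref{3.11} exactly as the paper does), and then induct on $\edim(R)-\edim(S)$ in the faithful case by extracting, via Lemma \ref{3.6} and prime avoidance, a non-zerodivisor $a\in\Ker(f)\setminus\mathfrak{m}^{2}$ and applying Lemma \ref{3.7}(ii). The one genuinely different step is the passage from the faithful case to the general case. The paper runs this through Andr\'{e}-Quillen homology: it compares the Jacobi-Zariski sequences for $R\to S\to T$ and $R\to T\to k$ with $T=S/\ann_{S}(M)$, counts ranks of images of the induced maps on $D_{1}(-|-;k)$, and invokes Lemma \ref{2.4} to conclude $D_{1}(T|S;k)=0$, hence $\ann_{S}(M)=0$. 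You instead apply the faithful case to $R\to S_{0}=R/\ann_{R}(M)$, deduce $\edim(S_{0})=\edim(S)$ from the squeeze of inequalities, hence $\Ker(f)+\mathfrak{m}^{2}=\ann_{R}(M)+\mathfrak{m}^{2}$, and then use that $\ann_{R}(M)$ is generated by elements linearly independent modulo $\mathfrak{m}^{2}$ (so that $\ann_{R}(M)\cap\mathfrak{m}^{2}=\mathfrak{m}\ann_{R}(M)$) together with the modular law and Nakayama to get $\Ker(f)=\ann_{R}(M)$. I checked the key auxiliary claim $\mathfrak{c}\cap\mathfrak{m}^{2}=\mathfrak{m}\mathfrak{c}$ for an ideal $\mathfrak{c}$ generated by part of a minimal generating set of $\mathfrak{m}$: it holds, since a relation $\sum r_{i}c_{i}\in\mathfrak{m}^{2}$ with some $r_{i}$ a unit would give a nontrivial dependence in $\mathfrak{m}/\mathfrak{m}^{2}$. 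So your argument is valid and is more elementary than the paper's, avoiding Remark \ref{2.1} and Lemma \ref{2.4} entirely; what it costs is that it leans on the full strength of the exceptional complete intersection conclusion for $R\to S_{0}$ (the linear independence of the generators of $\ann_{R}(M)$ mod $\mathfrak{m}^{2}$), whereas the paper's cotangent-complex bookkeeping is the version of this argument that generalizes beyond the surjective setting. Your extra verifications (lifting the regular sequence in the inductive step, and $\fd_{R}(S)=\fd_{R}(\widehat{S})$) are correct and fill in details the paper leaves implicit.
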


\begin{proof}
If $M$ is an injective $S$-module, then by Lemma \ref{3.1}, $M \cong E_{S}(l)^{\type_{S}(M)}$, so Lemma \ref{3.2} (ii) implies that $$\id_{R}(M)=\id_{R}\left(E_{S}(l)\right)=\id_{R}(S^{\vee})= \fd_{R}(S).$$
That is to say, the second equality in (iii) follows from (i).

We first reduce to the case where $S$ is $\mathfrak{n}$-adically complete. We note that $M\otimes_{S}\widehat{S}$ is a non-zero finitely generated $\widehat{S}$-module. Let $\mathfrak{a}$ be an ideal of $R$, and $L$ a free resolution of $R/ \mathfrak{a}$ in which $L_{i}$ is finitely generated for every $i\geq 0$. Then using the Tensor Evaluation (see \cite[Page 12]{Ch}), we get
\begin{equation*}
\begin{split}
 \Ext_{R}^{i}(R/ \mathfrak{a},M)\otimes_{S}\widehat{S} & = H_{-i}\left(\Hom_{R}(L,M)\right)\otimes_{S}\widehat{S} \\
 & \cong H_{-i}\left(\Hom_{R}(L,M)\otimes_{S}\widehat{S}\right) \\
 & \cong H_{-i}\left(\Hom_{R}\left(L,M\otimes_{S}\widehat{S}\right)\right) \\
 & = \Ext_{R}^{i}\left(R/ \mathfrak{a},M\otimes_{S}\widehat{S}\right)
\end{split}
\end{equation*}
for every $i\geq 0$. Since the completion map $S\rightarrow \widehat{S}$ is faithfully flat, the above isomorphism implies that for any $i\geq 0$, we have $\Ext_{R}^{i}(R/ \mathfrak{a},M)=0$ if and only if $\Ext_{R}^{i}\left(R/ \mathfrak{a},M\otimes_{S}\widehat{S}\right)=0$. Therefore, we infer from \cite[Proposition 3.1.10]{BH} that $\id_{R}(M)=\id_{R}\left(M\otimes_{S}\widehat{S}\right)$. On the other hand, Corollary \ref{3.5} implies that $M$ is an injective $S$-module if and only if $M\otimes_{S}\widehat{S}$ is an injective $\widehat{S}$-module. All in all, we can replace $S$ with $\widehat{S}$ and $M$ with $M\otimes_{S}\widehat{S}$, and assume that $S$ is $\mathfrak{n}$-adically complete.

We next reduce to the case where $f$ is surjective. By Remark \ref{2.3}, $f$ has a Cohen factorization
$$R \xrightarrow{\dot{f}} R' \xrightarrow{f'} S$$
in which $\dot{f}$ is flat with regular closed fiber $R'/ \mathfrak{m}R'$ and $f'$ is surjective. Moreover, $f$ is exceptional complete intersection if and only if $f'$ is exceptional complete intersection. In addition, in light of Lemma \ref{3.11}, Lemma \ref{2.2}, and our hypothesis, we see that:
\begin{equation*}
\begin{split}
 \id_{R'}(M) & \leq \id_{R}(M) + \edim\left(R'/ \mathfrak{m}R'\right) \\
 & = \id_{R}(M) + \edim(R')-\edim(R) \\
 & \leq \edim(R)-\edim(S) + \edim(R')-\edim(R) \\
 & = \edim(R')-\edim(S)
\end{split}
\end{equation*}
If $\id_{R'}(M)=\edim(R')-\edim(S)$, then the above display shows that $\id_{R}(M)=\edim(R)-\edim(S)$. Hence it suffices to prove the result for $f'$. Therefore, we can assume that $f$ is surjective, so that $M$ is a finitely generated $R$-module as well.

We now consider the special case in which $M$ is a faithful $S$-module, i.e. $\ann_{S}(M)=0$. Then $\ann_{R}(M)=f^{-1}\left(\ann_{S}(M)\right)=\Ker(f)$. We argue by induction on $\edim(R)-\edim(S)$.

If $\edim(R)-\edim(S)=0$, then our assumption implies that $\id_{R}(M)=0$, i.e. $M$ is an injective $R$-module. As $M$ is a finitely generated $R$-module, we deduce from Lemma \ref{3.1} that $M \cong E_{R}(k)^{\type_{R}(M)}$. But $E_{R}(k)\cong R^{\vee}=\Hom_{R}\left(R,E_{R}(k)\right)$, so $\ann_{R}\left(E_{R}(k)\right)=\ann_{R}(R)=0$. It follows that $\Ker(f)=\ann_{R}(M)=\ann_{R}\left(E_{R}(k)\right)=0$, so $f$ is an isomorphism. As a result, $M$ is an injective $S$-module and $f$ is an exceptional complete intersection map.

Next suppose that $\edim(R)-\edim(S)\geq 1$. Then $f$ is not an isomorphism, so $\ann_{R}(M)=\Ker(f)\neq 0$. As $\id_{R}(M)< \infty$, Lemma \ref{3.6} implies that there exists a non-zerodivisor in $\Ker(f)$, so $\Ker(f)\nsubseteq \mathcal{Z}(R)=\bigcup_{\mathfrak{p}\in \Ass(R)}\mathfrak{p}$. On the other hand, $\edim(S)=\edim\left(R/ \Ker(f)\right)\leq \edim(R)$ with equality if and only if $\Ker(f)\subseteq \mathfrak{m}^{2}$. But $\edim(R)-\edim(S)\geq 1$, so $\Ker(f)\nsubseteq \mathfrak{m}^{2}$. By the Prime Avoidance Lemma (see \cite[Theorem 3.61]{Sh}), we infer that $\Ker(f)\nsubseteq \mathfrak{m}^{2}\cup \bigcup_{\mathfrak{p}\in \Ass(R)}\mathfrak{p}$, so there exists a non-zerodivisor $a\in \Ker(f)\backslash \mathfrak{m}^{2}$. Consider the following factorization of $f$:
$$R \rightarrow R/ \langle a \rangle \xrightarrow{\bar{f}} S$$
As $a\in \mathfrak{m}\backslash \mathfrak{m}^{2}$, Lemma \ref{3.7} (ii) implies that $\id_{R/ \langle a \rangle}(M)=\id_{R}(M)-1$. Also, $\edim\left(R/ \langle a \rangle\right)=\edim(R)-1$ and $\dim\left(R/ \langle a \rangle\right)=\dim(R)-1$. Apply the induction hypothesis to $\bar{f}$ to conclude that $M$ is an injective $S$-module and $\bar{f}$ is an exceptional complete intersection map. Then by the choice of $a$, it follows that $f$ is an exceptional complete intersection map as well.

We finally consider the general case. Set $\mathfrak{a}=\ann_{S}(M)$, $T=S/ \mathfrak{a}$, and consider the ring homomorphisms
$$R \xrightarrow{f} S \xrightarrow{\pi} T$$
where $\pi$ is the quotient map. Clearly, $M$ is a finitely generated faithful $T$-module with
$$\id_{R}(M)\leq \edim(R)- \edim(S)\leq \edim(R)- \edim(T).$$
By the special case in the previous paragraph, $M$ is an injective $T$-module, $\pi f$ is an exceptional complete intersection map, and $\id_{R}(M)= \edim(R)- \edim(T)$. Thus the above inequalities imply that $\edim(S)= \edim(T)$.

Our goal is to show that $\mathfrak{a}=0$, and then we are done. Recall that $f$ is assumed to be surjective, so $k$ can be considered as the residue field of $S$ as well as $T$. Consider the commutative diagram
\begin{equation*}
  \begin{tikzcd}
  R \arrow{r}{f} \ar[equal]{d} & S \arrow{r}{\pi} \ar[equal]{d} & T \arrow{d}{\rho}
  \\
  R \arrow{r}{f} \ar[equal]{d} & S \arrow{r}{\varpi} \arrow{d}{\pi} & k \ar[equal]{d}
  \\
  R \arrow{r}{\pi f} & T \arrow{r}{\rho} &  k
\end{tikzcd}
\end{equation*}

\noindent
in which $\varpi$ and $\rho$ are quotient maps. In view of Remark \ref{2.1}, this diagram provides us with the commutative diagram
\begin{equation*}
  \begin{tikzcd}[row sep=3em]
  D_{1}(S|R;k) \arrow{r}{\pi_{\ast}= D_{1}(\pi |R;k)} \ar[equal]{d} & [3.5em] D_{1}(T | R;k) \arrow{r} \arrow{d}{\rho_{\ast}= D_{1}(\rho | R;k)} & D_{1}(T | S;k) \arrow{r} \arrow{d} & D_{0}(S|R;k)=0
  \\
  D_{1}(S|R;k) \arrow{r}{\varpi_{\ast}= D_{1}(\varpi | R;k)} \arrow{d}{\pi_{\ast}= D_{1}(\pi | R;k)} & D_{1}(k | R;k) \arrow{r} \ar[equal]{d} & D_{1}(k | S;k) \arrow{r} \arrow{d} & D_{0}(S|R;k)=0
  \\
  D_{1}(T | R;k) \arrow{r}{\rho_{\ast}= D_{1}(\rho | R;k)} & D_{1}(k | R;k) \arrow{r} & D_{1}(k | T;k) \arrow{r} & D_{0}(T|R;k)=0
\end{tikzcd}
\end{equation*}

\noindent
of vector spaces over $k$ with exact rows in which the vanishings follow from the surjectivity of $f$ and $\pi$. From the exactness of the second row, we get:
\begin{equation*}
\begin{split}
 \edim(S) & = \rank_{k}\left(D_{1}(k|S;k)\right) \\
 & = \rank_{k}\left(\Coker(\varpi_{\ast})\right) \\
 & = \rank_{k}\left(D_{1}(k|R;k)\right)- \rank_{k}\left(\im (\varpi_{\ast})\right) \\
 & = \edim(R)- \rank_{k}\left(\im (\varpi_{\ast})\right)
\end{split}
\end{equation*}
Similarly, from the exactness of the third row, we obtain:
\begin{equation*}
\begin{split}
 \edim(T) & = \rank_{k}\left(D_{1}(k|T;k)\right) \\
 & = \rank_{k}\left(\Coker(\rho_{\ast})\right) \\
 & = \rank_{k}\left(D_{1}(k|R;k)\right)- \rank_{k}\left(\im (\rho_{\ast})\right) \\
 & = \edim(R)- \rank_{k}\left(\im (\rho_{\ast})\right)
\end{split}
\end{equation*}
These relations conspire to yield:
$$\rank_{k}\left(\im(\rho_{\ast}\pi_{\ast})\right)= \rank_{k}\left(\im(\varpi_{\ast})\right)= \rank_{k}\left(\im(\rho_{\ast})\right)$$
But $\im(\rho_{\ast}\pi_{\ast}) \subseteq \im(\rho_{\ast})$, so we conclude that $\im(\rho_{\ast}\pi_{\ast}) = \im(\rho_{\ast})$. As $\pi f$ is a surjective exceptional complete intersection map, Lemma \ref{2.4} implies that $\rho_{\ast}$ is injective, so we deduce that $\pi_{\ast}$ is surjective. Thus from the exactness of the first row, we achieve $D_{1}(T|S;k)=0$, so $\mu(\mathfrak{a})= \rank_{k}\left(D_{1}(T|S;k)\right)=0$, whence $\mathfrak{a}=0$. This completes the proof.
\end{proof}

The following corollary is an analogue of \cite[Theorem 3.1]{BIK} for artinian modules.

\begin{corollary} \label{3.13}
Let $f:(R,\mathfrak{m},k) \rightarrow (S,\mathfrak{n},l)$ be a local homomorphism of noetherian local rings, and $M$ a non-zero artinian $S$-module with $\fd_{R}(M)\leq \edim(R)- \edim(S)$. Then the following assertions hold:
\begin{enumerate}
\item[(i)] $M$ is a free $S$-module, i.e. $M\cong S^{\mu_{S}(M)}$.
\item[(ii)] $f$ is an exceptional complete intersection map.
\item[(iii)] $\fd_{R}(M)= \edim(R)- \edim(S)= \fd_{R}(S)$.
\end{enumerate}
In particular, if $M$ is flat as an $R$-module, then $\edim(R)= \edim(S)$ and $f$ is flat.
\end{corollary}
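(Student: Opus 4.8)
The plan is to deduce this as the Matlis dual of Theorem \ref{3.12}, in precisely the same way that Theorem \ref{3.3} was obtained from \cite[Theorem 3.1]{BIK}. The point is that an artinian module over a complete local ring has a finitely generated Matlis dual, and under this duality flat dimension and injective dimension trade places (Lemma \ref{3.2}), while free modules and injective hulls of residue fields trade places.

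First I would pass to the completion of $S$. Since $M$ is an artinian $S$-module, it carries an $\widehat{S}$-module structure restricting to its $S$-structure, $M\cong M\otimes_{S}\widehat{S}$ is a non-zero artinian $\widehat{S}$-module, $\edim(\widehat{S})=\edim(S)$, and $E_{\widehat{S}}(l)\cong E_{S}(l)$; the underlying $R$-module of $M$ (via $f$, equivalently via the composite $g\colon R\to S\to\widehat{S}$) is unchanged, so $\fd_{R}(M)$ is unaffected. By Matlis duality over the complete ring $\widehat{S}$, the module $N:=\Hom_{\widehat{S}}\!\left(M,E_{\widehat{S}}(l)\right)$ is a non-zero finitely generated $\widehat{S}$-module with $M\cong N^{\vee}:=\Hom_{\widehat{S}}\!\left(N,E_{\widehat{S}}(l)\right)$. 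Now Lemma \ref{3.2}(ii), applied to $g\colon R\to\widehat{S}$ and $X=M$, gives $\id_{R}(N)=\fd_{R}(M)\le\edim(R)-\edim(\widehat{S})$, so Theorem \ref{3.12}, applied to the local homomorphism $g$ and the finitely generated $\widehat{S}$-module $N$, shows that $N$ is an injective $\widehat{S}$-module, $g$ is an exceptional complete intersection map, and $\id_{R}(N)=\edim(R)-\edim(\widehat{S})=\fd_{R}(\widehat{S})$.

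Next I would dualize back. By Lemma \ref{3.1}, $N\cong E_{\widehat{S}}(l)^{t}$ with $t=\type_{\widehat{S}}(N)$, hence $M\cong N^{\vee}\cong\Hom_{\widehat{S}}\!\left(E_{\widehat{S}}(l),E_{\widehat{S}}(l)\right)^{t}\cong\widehat{S}^{\,t}$; that is, $M$ is a free $\widehat{S}$-module of rank $t$. Since $M$ is a non-zero artinian free $\widehat{S}$-module, $\widehat{S}$ is artinian, hence so is $S$, and therefore the completion map $S\to\widehat{S}$ is an isomorphism. Transporting the conclusions back along $S=\widehat{S}$, we get that $M$ is a free $S$-module with $\mu_{S}(M)=t$, that $f=g$ is an exceptional complete intersection map, and that $\fd_{R}(M)=\edim(R)-\edim(S)=\fd_{R}(S)$, which is (i)--(iii). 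For the last assertion, if $M$ is flat over $R$ then $\fd_{R}(M)=0$, so the equalities in (iii) force $\edim(R)=\edim(S)$ and $\fd_{R}(S)=0$, i.e. $f$ is flat.

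The argument is essentially formal once Theorem \ref{3.12} is in hand, so there is no serious obstacle; the only thing demanding care is the bookkeeping around the completion. One must verify that the identifications $M\cong M\otimes_{S}\widehat{S}$, $E_{\widehat{S}}(l)\cong E_{S}(l)$, $M\cong M^{\vee\vee}$, and $\Hom_{\widehat{S}}\!\left(E_{\widehat{S}}(l),E_{\widehat{S}}(l)\right)\cong\widehat{S}$ are all legitimate (these are the standard Matlis-duality facts already used in the proof of Theorem \ref{3.3}), and that the apparent circularity---invoking that $M$ is free in order to conclude $S=\widehat{S}$---is harmless, since the whole argument is run over $\widehat{S}$ and the identity $S=\widehat{S}$ is used only at the very end to transfer the statements back to the original map $f\colon R\to S$.
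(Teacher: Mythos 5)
Your proof is correct and follows essentially the same route as the paper: complete $S$, Matlis-dualize $M$ to a non-zero finitely generated module, convert $\fd_{R}$ to $\id_{R}$ via Lemma \ref{3.2}(ii), apply Theorem \ref{3.12}, and dualize back. The only (harmless) difference is that you recover $S=\widehat{S}$ at the end from the artinian-ness forced by $M\cong\widehat{S}^{\,t}$, whereas the paper performs the length and type bookkeeping for $\mu_{S}(M)$ explicitly before the main reduction.
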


\begin{proof}
As we have observed before, since $M$ is artinian, it has an $\widehat{S}$-module structure that restricts to its $S$-module structure via the completion map $S\rightarrow \widehat{S}$. Since the completion map is faithfully flat, we have $\fd_{\widehat{S}}(M)=\fd_{\widehat{S}}\left(M\otimes_{S}\widehat{S}\right)=\fd_{S}(M)$, so $M$ is a flat $S$-module if and only $M$ is a flat $\widehat{S}$-module. By \cite[Theorem 18.6 (iii)]{Ma}, $E_{S}(l)$ is an $\widehat{S}$-module, so we have:
\begin{equation*}
\begin{split}
 M^{\vee} & = \Hom_{S}\left(M,E_{S}(l)\right) \\
 & \cong \Hom_{S}\left(M,\Hom_{\widehat{S}}\left(\widehat{S},E_{S}(l)\right)\right) \\
 & \cong \Hom_{\widehat{S}}\left(M\otimes_{S}\widehat{S},E_{S}(l)\right) \\
 & \cong \Hom_{\widehat{S}}\left(M,E_{\widehat{S}}\left(\widehat{S}/ \mathfrak{n}\widehat{S}\right)\right)
\end{split}
\end{equation*}
Therefore, if $M$ happens to be a flat $S$-module, then $M$ is a flat $\widehat{S}$-module, so by Lemma \ref{3.2} (ii) and the above isomorphism, $M^{\vee}$ is an injective $\widehat{S}$-module. On the other hand, by the Matlis duality theory, $M^{\vee}$ is a finitely generated $\widehat{S}$-module, so by Lemma \ref{3.1}, we get:
$$M^{\vee}\cong E_{\widehat{S}}\left(\widehat{S}/ \mathfrak{n}\widehat{S}\right)^{\type_{\widehat{S}}(M^{\vee})} \cong E_{S}(l)^{\type_{\widehat{S}}(M^{\vee})}$$
Moreover, by \cite[Exercise 3.1.23]{BH}, $\widehat{S}$ is artinian, so $S$ is artinian and we have
$$\ell_{S}(M)= \ell_{\widehat{S}}\left(M\otimes_{S}\widehat{S}\right)= \ell_{\widehat{S}}(M)= \ell_{\widehat{S}}(M^{\vee})< \infty,$$
which shows that $\mu_{S}(M)$ is well-defined. Now \cite[Proposition 3.2.12 (d)]{BH} implies that
$$\type_{\widehat{S}}(M^{\vee}) = \mu_{\widehat{S}}(M)= \mu_{\widehat{S}}\left(M\otimes_{S}\widehat{S}\right) = \mu_{S}(M),$$
so we get $M^{\vee}\cong E_{S}(l)^{\mu_{S}(M)}$. As $M$ is Matlis reflexive, we obtain:
$$M\cong M^{\vee\vee}\cong \left(E_{S}(l)^{\mu_{S}(M)}\right)^{\vee} \cong \left(E_{S}(l)^{\vee}\right)^{\mu_{S}(M)} \cong \widehat{S}^{\mu_{S}(M)} \cong S^{\mu_{S}(M)}$$
In particular, $\fd_{R}(M)= \fd_{R}(S)$. That is, the second equality in (iii) follows from (i).

As in the proof of Theorem \ref{3.3}, we can reduce to the case where $S$ is $\mathfrak{n}$-adically complete, so the Matlis duality theory implies that $M^{\vee}=\Hom_{S}\left(M,E_{S}(l)\right)$ is a finitely generated $S$-module. Now by Lemma \ref{3.2} (ii), we have
$$\id_{R}(M^{\vee})=\fd_{R}(M)\leq \edim(R)- \edim(S).$$
Therefore, Theorem \ref{3.12} implies that $M^{\vee}$ is an injective $S$-module, $f$ is an exceptional complete intersection map, and
$$\fd_{R}(M)= \id_{R}(M^{\vee})= \edim(R)- \edim(S).$$
Furthermore, $\fd_{S}(M)=\id_{S}(M^{\vee})=0$, so $M$ is a flat $S$-module.
\end{proof}

We need the following lemma in order to derive another corollary from our theorem.

\begin{lemma} \label{3.14}
Let $R \rightarrow S \rightarrow T$ be local homomorphisms of noetherian local rings, and $X$ a non-exact $T$-complex such that $H_{i}(X)=0$ for $|i|\gg 0$ and $H_{i}(X)$ is a finitely generated $T$-module for every $i\in \mathbb{Z}$. If $\id_{R}(X)< \infty$ and $\id_{S}(X)< \infty$, then $\fd_{R}(S)< \infty$.
\end{lemma}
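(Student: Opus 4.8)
The hypotheses give $\id_R(X)<\infty$ and $\id_S(X)<\infty$ for a bounded complex $X$ with finitely generated homology over $T$, and we want to conclude $\fd_R(S)<\infty$. The natural strategy is to pass to the Matlis dual complex $X^{\vee}=\Hom_T\bigl(X,E_T(l)\bigr)$, where $l$ is the residue field of $T$, and apply Lemma \ref{3.2}. Since $X$ is bounded, Lemma \ref{3.2}(i) gives $\fd_R(X^{\vee})=\id_R(X)<\infty$ and $\fd_S(X^{\vee})=\id_S(X)<\infty$. Moreover, $X^{\vee}$ is a complex with finitely generated homology over $\widehat{T}$ (Matlis duality), and it is non-exact because $X$ is. So the problem is reduced to: given a non-exact complex $Y=X^{\vee}$ over $\widehat T$ with bounded finitely generated homology such that $\fd_R(Y)<\infty$ and $\fd_S(Y)<\infty$, deduce $\fd_R(S)<\infty$.

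First I would reduce to the case where $T$ (hence the situation) is complete by replacing $T$ with $\widehat T$; this is harmless for flat dimensions since the completion map $S\to\widehat S$ (and $R\to$ its completion) is faithfully flat, and one checks $\fd_R(S)<\infty$ is unaffected, e.g. via the standard fact $\fd_R(S)=\fd_{\widehat R}(\widehat S)$ together with base change. After this reduction $X^{\vee}$ is an honest bounded complex of finitely generated $\widehat T$-modules (up to quasi-isomorphism). The key structural input is then the following: if a non-exact $T$-complex $Y$ with bounded finitely generated homology has $\fd_S(Y)<\infty$, then $\fd_R(S)<\infty$ \emph{if and only if} $\fd_R(Y)<\infty$. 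Indeed, choosing a bounded-below resolution of $Y$ by finite free $S$-modules (possible since $\fd_S(Y)<\infty$ and $Y$ has finitely generated homology, so $Y$ is isomorphic in the derived category to a bounded complex of finite free $S$-modules $F$), one has $Y\simeq F$ with each $F_i$ a finite free $S$-module. Then $\fd_R(Y)=\fd_R(F)$, and since $F$ is built from copies of $S$ in finitely many degrees, a dévissage on the (finitely many) nonzero modules $F_i$ shows $\fd_R(F)<\infty$ forces $\fd_R(S)<\infty$: if the top nonzero free module contributes, the long exact sequences in $\Tor^R(-,k)$ relating $F$ to its brutal truncations isolate $\fd_R(S)$ as bounded by $\fd_R(F)$ plus the length of $F$. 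Concretely, one can induct on the number of nonzero terms of $F$ using the short exact sequence of complexes $0\to \sigma_{\geq 1}F\to F\to F_0\to 0$ (with $F_0$ a finite free $S$-module, so $\fd_R(F_0)=\fd_R(S)$) and the resulting $\Tor$ long exact sequence, together with $\fd_R(\sigma_{\geq 1}F)<\infty$ coming from the induction; the non-exactness of $Y$ guarantees $F$ is genuinely nonzero so $S$ actually appears.

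Putting it together: from $\id_S(X)<\infty$ and Lemma \ref{3.2}(i) we get $\fd_S(X^{\vee})<\infty$; from $\id_R(X)<\infty$ similarly $\fd_R(X^{\vee})<\infty$; and $X^{\vee}$ is non-exact with bounded finitely generated homology over (the complete) $T$. Replacing $X^{\vee}$ by a finite free $S$-resolution $F$ and running the dévissage above yields $\fd_R(S)<\infty$, as desired. The main obstacle I anticipate is the dévissage step: one must be careful that $\fd_R$ of a bounded complex of finite free $S$-modules controls $\fd_R(S)$ and not merely bounds it from below in a useless direction — the point is that $S$ is a direct summand of each nonzero $F_i$, so $\fd_R(S)\leq\fd_R(F_i)\leq\fd_R(F)$ whenever $F_i\neq 0$, which is exactly what non-exactness supplies; the complementary bound and finiteness then follow from the standard inequality $\fd_R(F)\leq\sup\{\fd_R(F_i)\}+(\text{amplitude of }F)$.
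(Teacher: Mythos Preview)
Your overall strategy of passing to $X^{\vee}$ and using Lemma~\ref{3.2}(i) to convert injective dimensions into flat dimensions is exactly what the paper does. However, there are two genuine gaps in how you continue from there.

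\textbf{First, Matlis duality goes the wrong way.} You assert that $X^{\vee}$ has finitely generated homology over $\widehat{T}$. This is false: Matlis duality exchanges finitely generated modules and \emph{artinian} modules, so $H_{i}(X^{\vee})\cong H_{-i}(X)^{\vee}$ is artinian over $T$, not finitely generated. Consequently your key step---replacing $X^{\vee}$ in $\mathcal{D}(S)$ by a bounded complex of finite free $S$-modules---is unjustified. Such a replacement requires the homology to be finitely generated over $S$, which you do not have (and cannot get: $H_{i}(X)$ is only finitely generated over $T$, and $T$ need not be module-finite over $S$).

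\textbf{Second, the d\'evissage is circular.} Even granting a bounded complex $F$ of finite free $S$-modules with $\fd_{R}(F)<\infty$, your induction does not run. From the short exact sequence $0\to\sigma_{\geq 1}F\to F\to F_{0}\to 0$ you want to apply the induction hypothesis to $\sigma_{\geq 1}F$, but the induction hypothesis applies only to complexes already known to have finite flat dimension over $R$. To deduce $\fd_{R}(\sigma_{\geq 1}F)<\infty$ from the triangle you would need $\fd_{R}(F_{0})=\fd_{R}(S)<\infty$, which is the conclusion you are after. The asserted inequality $\fd_{R}(F_{i})\leq \fd_{R}(F)$ is not a general fact (consider any exact $F$), and non-exactness alone does not rescue it.

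The paper handles both issues at once by invoking the machinery of \cite{DGI}: since $X^{\vee}$ has bounded artinian homology over $T$ and $\fd_{S}(X^{\vee})<\infty$, it is \emph{virtually small} over $S$ by \cite[Theorem~4.9]{DGI}; then \cite[Theorem~5.5]{DGI} says that any virtually small $S$-complex with finite flat dimension over $R$ forces $\fd_{R}(S)<\infty$. The ``virtually small'' condition is precisely the thick-subcategory statement that makes a correct d\'evissage possible, but establishing it is nontrivial and is the real content here. Your reduction to $T$ complete is also unnecessary---the paper works directly with artinian homology.
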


\begin{proof}
Let $k$ denote the residue field of $T$. For any $i\in \mathbb{Z}$, $H_{-i}(X)$ is a finitely generated $T$-module, so the Matlis duality theory implies that
$$H_{i}(X^{\vee})= H_{i}\left(\Hom_{T}\left(X,E_{T}(k)\right)\right) \cong \Hom_{T}\left(H_{-i}(X),E_{T}(k)\right)= H_{-i}(X)^{\vee}$$
is an artinian $T$-module. By Lemma \ref{3.2} (i), $\fd_{S}(X^{\vee})= \id_{S}(X) < \infty$, so by \cite[Theorem 4.9]{DGI}, $X^{\vee}$ is virtually small over $S$ in the sense of \cite[4.1]{DGI}. On the other hand, $\fd_{R}(X^{\vee})= \id_{R}(X) < \infty$, so \cite[Theorem 5.5]{DGI} implies that $\fd_{R}(S)< \infty$.
\end{proof}

\begin{corollary} \label{3.15}
Let $f:(R,\mathfrak{m}) \rightarrow (S,\mathfrak{n})$ be a local homomorphism of noetherian local rings, and $M$ a non-zero finitely generated $S$-module. Then the following assertions hold:
\begin{enumerate}
\item[(i)] If $\id_{S}(M)< \infty$, then we have:
$$\edim(S)+ \id_{R}(M) \geq \edim(R)+ \id_{S}(M)$$
Moreover, if equality holds, then $f$ is an exceptional complete intersection map.
\item[(ii)] If $f$ is an exceptional complete intersection map, then we have:
$$\edim(S)+ \id_{R}(M) \geq \edim(R)+ \id_{S}(M)$$
\end{enumerate}
\end{corollary}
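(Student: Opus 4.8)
The plan is to deduce both parts from the reduction scheme used in the proof of Theorem~\ref{3.12}, together with Lemmas~\ref{3.7}, \ref{3.11}, \ref{3.14} and the Cohen factorization formalism of Remark~\ref{2.3}. For part (ii), which is the short one, I would first dispose of the case $\id_{R}(M)=\infty$ (the inequality is then vacuous) and assume $\id_{R}(M)<\infty$. Exactly as in the proof of Theorem~\ref{3.12} (Tensor Evaluation and Corollary~\ref{3.5}) one reduces to $S$ complete, since $\id_{R}(M)=\id_{R}(M\otimes_{S}\widehat{S})$, $\id_{\widehat{S}}(M\otimes_{S}\widehat{S})=\id_{S}(M)$, $\edim(\widehat{S})=\edim(S)$, and $f$ is exceptional complete intersection iff $R\to\widehat{S}$ is. Then choose a Cohen factorization $R\xrightarrow{\dot{f}}R'\xrightarrow{f'}S$ with $\dot{f}$ flat with regular closed fiber and $f'$ surjective; since $f'$ is exceptional complete intersection (Remark~\ref{2.3}), $\Ker(f')$ is generated by a regular sequence $a_{1},\dots,a_{c}$ forming part of a minimal generating set of the maximal ideal of $R'$, where $c=\edim(R')-\edim(S)$. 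Lemma~\ref{3.11} and Lemma~\ref{2.2} give $\id_{R'}(M)\le\id_{R}(M)+\edim(R')-\edim(R)$, and $c$ successive applications of Lemma~\ref{3.7}(ii) give $\id_{S}(M)=\id_{R'}(M)-c$; combining these yields $\id_{S}(M)\le\id_{R}(M)+\edim(S)-\edim(R)$, which is the claim.

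For part (i) I would again assume $\id_{R}(M)<\infty$ (otherwise the inequality is vacuous and, the right side being finite, equality is impossible), reduce to $S$ complete, and take a Cohen factorization $R\xrightarrow{\dot{f}}R'\xrightarrow{f'}S$ with $f'$ surjective. Lemma~\ref{3.11} and Lemma~\ref{2.2} give $\edim(R)+\id_{R'}(M)\le\edim(R')+\id_{R}(M)$ (so in particular $\id_{R'}(M)<\infty$), and chasing this inequality shows that the assertion for $f'$ implies the assertion for $f$: the inequality transfers, and in the equality case every inequality in the chain collapses, so equality holds for $f'$ as well and $f'$, hence $f$, is exceptional complete intersection (Remark~\ref{2.3}). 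Thus one may assume $f$ is surjective, $S=R/\mathfrak{a}$ with $\mathfrak{a}=\Ker(f)$ and $M$ finitely generated over $R$, and then induct on $t:=\edim(R)-\edim(S)=\dim_{k}\big((\mathfrak{a}+\mathfrak{m}^{2})/\mathfrak{m}^{2}\big)$. At every stage of the induction $\id_{R}(M),\id_{S}(M)<\infty$, so Lemma~\ref{3.14} (with $X=M$ over $R\to S\xrightarrow{\mathrm{id}}S$) gives $\fd_{R}(S)<\infty$, hence $\pd_{R}(S)<\infty$ as $S$ is finitely generated over $R$.

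In the base case $\mathfrak{a}\subseteq\mathfrak{m}^{2}$ (that is, $t=0$): when $\mathfrak{a}=0$, $f$ is an isomorphism and the claim is trivial; when $\mathfrak{a}\ne0$, $\pd_{R}(S)\ge1$, so by Auslander--Buchsbaum $\depth(R)-\depth(S)=\pd_{R}(S)\ge1$, while by the Bass formula $\id_{R}(M)=\depth(R)$ and $\id_{S}(M)=\depth(S)$, whence $\id_{R}(M)-\id_{S}(M)=\pd_{R}(S)\ge1>0=t$ --- the inequality is strict and the equality clause is vacuous. In the inductive step $t\ge1$ one has $\mathfrak{a}\nsubseteq\mathfrak{m}^{2}$ and $\mathfrak{a}\ne0$; since $\pd_{R}(S)<\infty$ and $\mathfrak{a}=\ann_{R}(S)$, the ideal $\mathfrak{a}$ contains a non-zerodivisor, so by Prime Avoidance there is $a\in\mathfrak{a}\setminus\mathfrak{m}^{2}$ that is a non-zerodivisor; factoring $f$ through $R/\langle a\rangle$, Lemma~\ref{3.7}(ii) gives $\id_{R/\langle a\rangle}(M)=\id_{R}(M)-1$ and $\edim(R/\langle a\rangle)=\edim(R)-1$, so applying the induction hypothesis to $R/\langle a\rangle\to S$ yields $\id_{R/\langle a\rangle}(M)\ge\id_{S}(M)+(t-1)$, i.e. $\id_{R}(M)\ge\id_{S}(M)+t$; and in the equality case $R/\langle a\rangle\to S$ is exceptional complete intersection, hence so is $f$, exactly as at the end of the proof of Theorem~\ref{3.12} (since $R\to R/\langle a\rangle$ kills the non-zerodivisor $a\notin\mathfrak{m}^{2}$).

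The hard part is keeping control of the equality statement through the reduction to the surjective case --- one needs the Lemma~\ref{3.11} estimate to be sharp precisely when equality holds for $f'$ --- together with the base case, where passing from injective dimensions to depths via the Bass formula requires $M$ to be finitely generated over both $R$ and $S$ (which is why one reduces to $f$ surjective first) and the Auslander--Buchsbaum step requires $\pd_{R}(S)<\infty$, which is exactly what Lemma~\ref{3.14} supplies from the two finiteness hypotheses.
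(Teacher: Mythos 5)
Your proof is correct. For part (i) it follows the paper's architecture almost exactly: the same reductions (completion, Cohen factorization, surjective case), the same appeal to Lemma~\ref{3.14} to get $\pd_{R}(S)<\infty$, and the same induction on $\edim(R)-\edim(S)$ by slicing along a non-zerodivisor $a\in\Ker(f)\setminus\mathfrak{m}^{2}$. The only organizational differences are that the paper first converts the claim into $\pd_{R}(S)\geq\edim(R)-\edim(S)$ via Auslander--Buchsbaum and Bass and then inducts using Lemma~\ref{3.7}(i), whereas you induct directly on the injective-dimension inequality via Lemma~\ref{3.7}(ii) and use Auslander--Buchsbaum/Bass only in the base case; and in the equality case the paper invokes \cite[Theorem 3.1]{BIK} applied to $\fd_{R}(S)=\edim(R)-\edim(S)$, whereas you propagate the exceptional complete intersection property through the induction as in Theorem~\ref{3.12}. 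Both variants work. Part (ii) is where you genuinely diverge: the paper cites \cite[Corollary 4.2]{ILP} to conclude $\id_{S}(M)=\id_{R}(M)-\dim(R)+\dim(S)<\infty$ and then simply applies part (i), while you argue directly by writing $\Ker(f')$ as a regular sequence of length $c=\edim(R')-\edim(S)$ forming part of a minimal generating set and applying Lemma~\ref{3.7}(ii) $c$ times to get $\id_{S}(M)=\id_{R'}(M)-c$, then combining with Lemmas~\ref{3.11} and~\ref{2.2}. Your route makes (ii) independent of (i) and avoids the external citation, at the cost of using the explicit structure of surjective exceptional complete intersection maps from Remark~\ref{2.3}; the paper's route is shorter. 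A minor point in your favor: in the surjective inductive step you justify the existence of a non-zerodivisor in $\Ker(f)$ via $\pd_{R}(S)<\infty$, which is cleaner than the paper's ``as in the proof of Theorem~\ref{3.12},'' since there Lemma~\ref{3.6} was applied under a faithfulness hypothesis on $M$ that need not hold in the present setting.
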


\begin{proof}
(i): If $\id_{R}(M)= \infty$, then there is nothing to prove, so we may assume that $\id_{R}(M)< \infty$. As a result, Lemma \ref{3.14} yields $\fd_{R}(S)< \infty$.

We first handle the special case where $f$ is surjective. Then $M$ is a finitely generated $R$-module as well as $S$-module, $S$ is a finitely generated $R$-module, $\id_{R}(M)< \infty$, $\id_{S}(M)< \infty$, and $\pd_{R}(S)=\fd_{R}(S)< \infty$. These assumptions allow us to take advantage of Auslander-Buchsbaum and Bass formulas (see \cite[Theorems 1.3.3 and 3.1.17]{BH}) to write:
\begin{equation*}
\begin{split}
 \id_{R}(M) - \id_{S}(M) & = \depth(\mathfrak{m},R) - \depth(\mathfrak{n},S) \\
 & = \depth(\mathfrak{m},R) - \depth_{R}(\mathfrak{m},S) \\
 & = \pd_{R}(S)
\end{split}
\end{equation*}
Therefore, it suffices to show that $\pd_{R}(S)\geq \edim(R) - \edim(S)$. To this end, we argue by induction on $\edim(R) - \edim(S)$.

If $\edim(R) - \edim(S)=0$, then there is nothing to prove. Now suppose that $\edim(R) - \edim(S)\geq 1$. As in the proof of Theorem \ref{3.12}, we can obtain a non-zerodivisor $a\in \Ker(f)\backslash \mathfrak{m}^{2}$. In light of Lemma \ref{3.7} (i) and the induction hypothesis applied to $\bar{f}:R/ \langle a \rangle \rightarrow S$, we see that:
\begin{equation*}
\begin{split}
 \pd_{R}(S) & = \pd_{R/ \langle a \rangle}(S)+1 \\
 & \geq \edim\left(R/ \langle a \rangle\right)- \edim(S)+1 \\
 & = \edim(R) - \edim(S)
\end{split}
\end{equation*}
Now if equality holds above, i.e. $\fd_{R}(S)= \pd_{R}(S)= \edim(R) - \edim(S)$, then \cite[Theorem 3.1]{BIK} implies that $f$ is an exceptional complete intersection map.

Next consider the general case. It is clear that $M\otimes_{S}\widehat{S}$ is a non-zero finitely generated $\widehat{S}$-module. By Corollary \ref{3.5}, we have $\id_{\widehat{S}}\left(M\otimes_{S}\widehat{S}\right) = \id_{S}(M)$. In addition, we observed in the proof of Theorem \ref{3.12} that $\id_{R}\left(M\otimes_{S}\widehat{S}\right)= \id_{R}(M)$ and $\edim\left(\widehat{S}\right)=\edim(S)$. Hence we can replace $S$ with $\widehat{S}$ and $M$ with $M\otimes_{S}\widehat{S}$, and assume accordingly that $S$ is $\mathfrak{n}$-adically complete. Then by Remark \ref{2.3}, $f$ has a Cohen factorization
$$R \xrightarrow{\dot{f}} R' \xrightarrow{f'} S$$
in which $\dot{f}$ is flat with regular closed fiber $R'/ \mathfrak{m}R'$ and $f'$ is surjective. By Lemmas \ref{3.11} and \ref{2.2}, we have:
\begin{equation*}
\begin{split}
 \id_{R'}(M) & \leq \id_{R}(M) + \edim\left(R'/ \mathfrak{m}R'\right) \\
 & = \id_{R}(M) + \edim(R')-\edim(R)
\end{split}
\end{equation*}
Applying the special case to $f'$, we get:
\begin{equation*}
\begin{split}
 \id_{R}(M)- \id_{S}(M) & \geq \id_{R'}(M)- \id_{S}(M) + \edim(R) - \edim(R') \\
 & \geq \edim(R')- \edim(S) + \edim(R) - \edim(R') \\
 & = \edim(R)-\edim(S)
\end{split}
\end{equation*}
Now if
$$\id_{R}(M)- \id_{S}(M) = \edim(R)-\edim(S),$$
then the above display shows that
$$\id_{R'}(M)- \id_{S}(M) = \edim(R')-\edim(S),$$
so by the special case, $f'$ is exceptional complete intersection, thereby Remark \ref{2.3} implies that $f$ is exceptional complete intersection.

(ii): If $\id_{R}(M)= \infty$, then there is nothing to prove, so we may assume that $\id_{R}(M)< \infty$. Since $f$ is exceptional complete intersection, \cite[Corollary 4.2]{ILP} yields
$$\id_{S}(M)= \id_{R}(M)- \dim(R) + \dim(S)< \infty.$$
Now the result follows from (i).
\end{proof}

\section*{Acknowledgement}
It is a pleasure to express my sincerest gratitude to professor Srikanth Iyengar for his invaluable comments and suggestions on this manuscript which is part of my Ph.D. thesis at Clemson University.


\end{document}